\newtheorem{theorem}{Theorem}[section]
\newtheorem{corollary}{Corollary}[section]
\newtheorem{example}{Example}[section]
\newtheorem{lemma}{Lemma}[section]
\newtheorem{proposition}[theorem]{Proposition}
\newtheorem{remark}{Remark}[section]
\newenvironment{proof}[1][Proof]{\noindent\textbf{#1.} }{\ \rule{0.5em}{0.5em}}
\begin{document}

\title{On universal realizability of spectra.\thanks{%
Supported by Fondecyt 1170313, Chile; Conicyt-PAI 79160002, 2016, Chile;
MTM2015-365764-C-1 (MINECO/FEDER); MTM2017-85996-R (MINECO/FEDER); Consejer\'ia de Educaci\'on de la Junta de Castilla y Le\'on (Spain) VA128G18. }}
\author{Ana I. Julio$^{a}$, Carlos Mariju\'{a}n$^{b},$ Miriam Pisonero$^{b},$
Ricardo L. Soto$^{a}$\thanks{%
*Corresponding author: rsoto@ucn.cl (R.L. Soto), ajulio@ucn.cl (A.I. Julio),
marijuan@mat.uva.es (C. Mariju\'{a}n), mpisoner@maf.uva.es (M. Pisonero).} \\
$^{a}${\small Dpto. Matem\'{a}ticas, Universidad Cat\'{o}lica del Norte,
Casilla 1280}\\
{\small Antofagasta, Chile.}\\
$^{b}${\small Dpto. Matem\'{a}tica Aplicada, Universidad de Valladolid/IMUVA, Spain. }}
\date{}
\maketitle

\begin{abstract}
A list $\Lambda =\{\lambda _{1},\lambda_{2},\ldots ,\lambda _{n}\}$ of complex numbers
is said to be \textit{realizable} if it is the spectrum of an entrywise nonnegative matrix. The list $\Lambda $ is said to be \textit{universally realizable} ($\mathcal{%
UR}$) if it is the spectrum of a nonnegative matrix for each possible Jordan
canonical form allowed by $\Lambda $. It is well known that an $n\times n$
nonnegative matrix $A$ is co-spectral to a nonnegative matrix $B$ with
constant row sums. In this paper, we extend the co-spectrality between $A$
and $B$ to a similarity between $A$ and $B$, when the Perron eigenvalue is simple.
 We also show that if \ $\epsilon \geq 0$ and $\Lambda =\{\lambda _{1},\lambda_{2},\ldots ,\lambda _{n}\}$ is $\mathcal{UR}%
,$ then $\{\lambda _{1}+\epsilon ,\lambda _{2},\ldots
,\lambda _{n}\}$ is also $\mathcal{UR}$.  We give
counter-examples for the cases: $\Lambda =\{\lambda
_{1},\lambda_{2},\ldots ,\lambda _{n}\}$ is $\mathcal{UR}$ implies $\{\lambda _{1}+\epsilon ,\lambda _{2}-\epsilon ,\lambda_{3},\ldots ,\lambda
_{n}\}$ is $\mathcal{UR},$ and $\Lambda _{1},\Lambda _{2}$ are $\mathcal{UR}$
implies $\Lambda _{1}\cup \Lambda _{2}$ is  $\mathcal{UR}$.
\end{abstract}

\textit{Key words: nonnegative matrix, inverse eigenvalue problem, universal
realizability. }

\section{Introduction}

Let $M_{n}$ denote the set of $n\times n$ real matrices and $M_{k,l}$ the set of $k\times l$ real matrices. Let $A\in M_{n}$ and let 
\begin{equation*}
J(A)=S^{-1}AS=diag\left( J_{n_{1}}(\lambda _{1}),J_{n_{2}}(\lambda
_{2}),\ldots ,J_{n_{k}}(\lambda _{k})\right)
\end{equation*}%
be the \textit{Jordan canonical form} of $A$ (hereafter JCF of $A$), where
the $n_{i}\times n_{i}$ submatrices 
\begin{equation*}
J_{n_{i}}(\lambda _{i})=%
\begin{bmatrix}
\lambda _{i} & 1 &  &  \\ 
& \lambda _{i} & \ddots &  \\ 
&  & \ddots & 1 \\ 
&  &  & \lambda _{i}%
\end{bmatrix}%
,\text{ }i=1,\ldots ,k,
\end{equation*}%
are called the \textit{Jordan blocks} of $J(A)$. The \textit{elementary
divisors} of $A$ are the characteristic polynomials of $J_{n_{i}}(\lambda
_{i})$, $i=1,\ldots ,k$. The \textit{nonnegative inverse elementary divisors
problem} (hereafter NIEDP) is the problem of determining necessary and
sufficient conditions for the existence of an $n\times n$ entrywise nonnegative matrix
with prescribed elementary divisors \cite{Collao, Diaz Soto, Minc 1, Minc 2,
Soto Ccapa, Soto Diaz, Soto Julio, Soto4}. If there exists a nonnegative
matrix with spectrum $\Lambda =\{\lambda _{1},\lambda_{2},\ldots ,\lambda _{n}\}$ for
each possible Jordan canonical form allowed by $\Lambda ,$ we say that $%
\Lambda $ is \textit{universally realizable} $(\mathcal{UR}).$ If $\Lambda $ is the
spectrum of a nonnegative diagonalizable matrix, then $\Lambda $ is said to
be \textit{diagonalizably realizable} $(\mathcal{DR}).$

The NIEDP is closely related to the \textit{nonnegative inverse
eigenvalue problem} (hereafter NIEP), which is the problem of characterizing
all possible spectra of entrywise nonnegative matrices. If there is a
nonnegative matrix $A$ with spectrum $\Lambda =\{\lambda _{1},\lambda_{2},\ldots
,\lambda _{n}\},$ we say that $\Lambda $ is \textit{realizable} and that $A$ is a
\textit{realizing matrix}. Both problems, the NIEDP and the NIEP, remain unsolved. A
complete solution for the NIEP is known only for $n\leq 4.$

Throughout this paper, the first written element of a list $\Lambda
=\{\lambda _{1},\lambda_{2},\ldots ,\lambda _{n}\},$ \textit{i.e.} $\lambda _{1},$ is the Perron
eigenvalue of $\Lambda ,$ $\lambda _{1}=\max \{\left\vert \lambda
_{i}\right\vert ,$ $\lambda _{i}\in \Lambda \}.$ If $\Lambda $ is the
spectrum of a nonnegative matrix $A,$ we write $\rho (A)=\lambda _{1}$ for
the spectral radius of $A$.

In this paper, we ask whether certain properties of the NIEP, such
as the three rules that characterize the $C$-realizability of lists (see 
\cite{Borobia}), extend or not to the NIEDP. In particular, we ask: \newline
$1)$ If $\Lambda =\{\lambda _{1},\lambda_{2},\ldots ,\lambda _{n}\}$ is $\mathcal{UR}$,
is $\{\lambda _{1}+\epsilon ,\lambda _{2},\ldots ,\lambda _{n}\}$ also $%
\mathcal{UR}$ for any $\epsilon >0$?\newline
$2)$ If $\Lambda =\{\lambda _{1},\lambda _{2},\ldots ,\lambda _{n}\}$ is $%
\mathcal{UR}$ and $\lambda _{2}$ is real, is $\{\lambda _{1}+\epsilon
,\lambda _{2}- \epsilon ,\lambda _{3},\ldots ,\lambda _{n}\}$ also $%
\mathcal{UR}$ for any $\epsilon >0$?\newline
$3)$ If the lists $\Lambda _{1}$ and $\Lambda _{2}$ are $\mathcal{UR}$, is $%
\Lambda _{1}\cup \Lambda _{2}$ also $\mathcal{UR}$?

In \cite{Cronin}, Cronin and Laffey examine the subtle difference
between the \textit{symmetric nonnegative inverse eigenvalue problem} (SNIEP), in
which the realizing matrix is required to be symmetric, and the \textit{real
diagonalizable nonnegative inverse eigenvalue problem} (DRNIEP), in which the
realizing matrix is diagonalizable. The authors in \cite{Cronin} give
examples of lists of real numbers, which can be the spectrum of a
nonnegative matrix, but not the spectrum of a diagonalizable nonnegative
matrix.

The set of all $n\times n$ real matrices with constant row sums
equal to $\alpha \in 
\mathbb{R}
$ will be denoted by $CS_{\alpha }.$ It is clear that $\mathbf{e}%
=[1,1,\ldots ,1]^{T}$ is an eigenvector of any matrix $A\in CS_{\alpha },$
corresponding to the eigenvalue $\alpha .$ Denote by $\mathbf{e}_{k}$ the
vector with $1$ in the $k^{th}$ position and zeros elsewhere. The importance
of matrices with constant row sums is due to the well known fact that an $%
n\times n$ nonnegative matrix $A$ with spectrum $\Lambda =\{\lambda
_{1},\lambda_{2},\ldots ,\lambda _{n}\},$ $\lambda _{1}$ being the Perron eigenvalue, is
co-spectral to a nonnegative matrix $B\in \mathcal{CS}_{\lambda _{1}}$ \cite%
{Johnson, Guo}. In this paper, we extend the co-spectrality between $A$ and $%
B$ to similarity between $A$ and $B,$ when $\lambda _{1}$ is simple, and
therefore $J(A)=J(B).$ In what follows, we use the following notations and
results: we write $A\geq 0$ if $A$ is a nonnegative matrix, and $A>0$ if $A$
is a positive matrix, that is, if all its entries are positive. We shall use the same notation for vectors.

\begin{theorem}{\rm \cite[(2.7) Theorem p. 141]{Berman} 
\label{Berman}}  Let $A\in \{M=(m_{ij})\in M_{n} : m_{ij}\leq 0, i\neq j\}$ be an irreducible matrix. Then each one of the following conditions is equivalent to the statement: ``$%
A $ is a nonsingular $M$-matrix". \newline
$i)$ $A^{-1}$ is positive. \newline
$ii)$ $A\mathbf{x}\geq 0$ and $A\mathbf{x}\neq 0$ for some $\mathbf{x}$ positive.
\end{theorem}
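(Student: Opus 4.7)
The plan is to reduce the three conditions to the Perron--Frobenius theorem by writing the $Z$-matrix $A$ in the standard form $A = dI - B$, where $d = \max_i a_{ii}$ (or any larger scalar), so that $B \geq 0$; since the off-diagonal pattern of $B$ coincides with that of $-A$, irreducibility of $A$ transfers to $B$. The definition ``$A$ is a nonsingular $M$-matrix'' then amounts to $d > \rho(B)$, and I would establish the cycle of implications ``$M$-matrix $\Rightarrow (i) \Rightarrow (ii) \Rightarrow M$-matrix''.

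For ``$M$-matrix $\Rightarrow (i)$'', expand the Neumann series
$$A^{-1} = \frac{1}{d}\left(I - \frac{B}{d}\right)^{-1} = \frac{1}{d}\sum_{k=0}^{\infty}\left(\frac{B}{d}\right)^{k},$$
which converges because $\rho(B/d) < 1$, and is entrywise positive since irreducibility of $B$ guarantees that $I + B + \cdots + B^{n-1}$ is a positive matrix. For ``$(i) \Rightarrow (ii)$'' I would simply pick any positive vector $\mathbf{y}$ and set $\mathbf{x} = A^{-1}\mathbf{y}$; then $\mathbf{x} > 0$ while $A\mathbf{x} = \mathbf{y}$ is positive, hence nonnegative and nonzero.

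The main obstacle is ``$(ii) \Rightarrow M$-matrix'', which is where Perron--Frobenius does the real work. Here I would use the left Perron eigenvector $\mathbf{v} > 0$ of $B$, that is $\mathbf{v}^{T}B = \rho(B)\mathbf{v}^{T}$, whose existence is guaranteed by applying the Perron--Frobenius theorem to $B^{T}$ (also nonnegative and irreducible). Given $\mathbf{x} > 0$ with $A\mathbf{x} \geq 0$ and $A\mathbf{x} \neq 0$, the computation
$$\mathbf{v}^{T}A\mathbf{x} = \mathbf{v}^{T}(dI - B)\mathbf{x} = (d - \rho(B))\,\mathbf{v}^{T}\mathbf{x}$$
forces $d > \rho(B)$: the right-hand factor $\mathbf{v}^{T}\mathbf{x}$ is strictly positive since both vectors are positive, while the left-hand side is strictly positive because $\mathbf{v} > 0$ pairs with a nonzero nonnegative vector $A\mathbf{x}$. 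Dividing yields $d - \rho(B) > 0$, which is precisely the $M$-matrix condition and closes the cycle.
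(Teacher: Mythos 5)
The paper does not prove this theorem; it is imported verbatim from Berman--Plemmons \cite{Berman} and used as a tool in Lemma \ref{lema1}, so there is no internal proof to compare against. Your argument is nevertheless correct and is essentially the standard Perron--Frobenius proof one finds in the $M$-matrix literature. A few small points worth tightening if you were to write it out in full: the claim that ``$A$ is a nonsingular $M$-matrix'' amounts to $d>\rho(B)$ is decomposition-independent because the eigenvalues of $A=dI-B$ are exactly $d-\mu$ for $\mu\in\sigma(B)$, so $d-\rho(B)$ is the minimal real part of $\sigma(A)$ regardless of which $d$ you chose; for the positivity of the Neumann series, the cleanest justification is that $\sum_{k\ge 0}(B/d)^k \geq \sum_{k=0}^{n-1}(B/d)^k$, and the latter has a strictly positive $(i,j)$ entry because irreducibility of $B$ gives some $k\le n-1$ with $(B^k)_{ij}>0$ (equivalently $(I+B)^{n-1}>0$); and in the last step the strict inequality $\mathbf{v}^T(A\mathbf{x})>0$ does genuinely need both $\mathbf{v}>0$ and $A\mathbf{x}\ge 0$, $A\mathbf{x}\ne 0$, which you correctly invoke. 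The cycle $M\text{-matrix}\Rightarrow(i)\Rightarrow(ii)\Rightarrow M\text{-matrix}$ is closed, so the proof is sound.
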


\begin{theorem}{\rm \cite{Soto Ccapa}\label{Soto Ccapa}}
Let $\mathbf{q}=[q_{1},\ldots
,q_{n}]^{T} $ be an arbitrary $n$-dimensional vector and $E_{11}\in M_{n}$
with $1$ in the $(1,1)$ position and zeros elsewhere. Let $A\in \mathcal{CS}%
_{\lambda _{1}}$ with JCF 
\begin{equation*}
J(A)=S^{-1}AS=diag\left( J_{1}(\lambda _{1}),J_{n_{2}}(\lambda _{2}),\ldots
,J_{n_{k}}(\lambda _{k})\right) .
\end{equation*}%
If $\lambda _{1}+\sum_{i=1}^{n}q_{i}\neq \lambda _{i},$ \ $i=2,\ldots ,n$,
then the matrix $A+\mathbf{eq}^{T}$ has Jordan canonical form $%
J(A)+(\sum_{i=1}^{n}q_{i})E_{11}$. In particular, if $\sum_{i=1}^{n}q_{i}=0,$
then $A$ and $A+\mathbf{eq}^{T}$ are similar.
\end{theorem}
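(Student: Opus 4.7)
The plan is to exploit the fact that since $A\in \mathcal{CS}_{\lambda_1}$, the all-ones vector $\mathbf{e}$ is an eigenvector of $A$ associated with $\lambda_1$. Because the JCF of $A$ opens with the $1\times 1$ block $J_1(\lambda_1)$, one can rescale the Jordan basis so that the first column of $S$ is exactly $\mathbf{e}$; equivalently $S\mathbf{e}_1=\mathbf{e}$, hence $S^{-1}\mathbf{e}=\mathbf{e}_1$. Setting $\mathbf{r}^T:=\mathbf{q}^TS$ one then computes
\[
S^{-1}(A+\mathbf{e}\mathbf{q}^T)S \;=\; J(A)+\mathbf{e}_1\mathbf{r}^T,
\]
which perturbs $J(A)$ only in its first row. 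The $(1,1)$ entry is shifted by $\mathbf{r}^T\mathbf{e}_1=\mathbf{q}^T\mathbf{e}=\sum_{i=1}^n q_i$, so that new $(1,1)$ entry is $\mu:=\lambda_1+\sum_{i=1}^n q_i$.

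Next I would partition $J(A)=\mathrm{diag}(\lambda_1,J_0)$ with $J_0=\mathrm{diag}(J_{n_2}(\lambda_2),\ldots,J_{n_k}(\lambda_k))$, so that after the first similarity the matrix has block form $\left(\begin{smallmatrix}\mu & \mathbf{w}^T\\ 0 & J_0\end{smallmatrix}\right)$ for a suitable $\mathbf{w}\in\mathbb{R}^{n-1}$. The hypothesis $\mu\neq\lambda_i$ for $i\geq 2$ makes $\mu I-J_0$ nonsingular, so the linear equation $\mathbf{v}^T(\mu I-J_0)=-\mathbf{w}^T$ admits a unique solution $\mathbf{v}$. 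Conjugation by the block-upper-triangular matrix $T$ with blocks $1,\mathbf{v}^T,0,I$ then wipes out the top-right row and yields
\[
T^{-1}\bigl(J(A)+\mathbf{e}_1\mathbf{r}^T\bigr)T \;=\; \mathrm{diag}(\mu,J_0) \;=\; J(A)+\Bigl(\textstyle\sum_{i=1}^n q_i\Bigr)E_{11},
\]
which is already in JCF thanks once more to $\mu\neq\lambda_i$. Composing the similarities by $S$ and then $T$ proves the first claim. The \emph{in particular} statement is now immediate: if $\sum q_i=0$ then $\mu=\lambda_1$, so $J(A+\mathbf{e}\mathbf{q}^T)=J(A)$ and $A\sim A+\mathbf{e}\mathbf{q}^T$.

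The step I expect to require the most care is justifying the choice of $S$ with first column exactly $\mathbf{e}$: this rests on the $J_1(\lambda_1)$ block isolating a $1$-dimensional $\lambda_1$-Jordan chain in the basis, whose generator can be freely rescaled to coincide with the known eigenvector $\mathbf{e}$. A secondary check is verifying that $\mathrm{diag}(\mu,J_0)$ is a bona fide JCF rather than merely a matrix similar to one; the distinctness hypothesis $\mu\neq\lambda_i$ for $i\geq 2$ is precisely what prevents the new $\mu$-block from merging with any existing chain and altering the block sizes.
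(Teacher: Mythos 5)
The paper quotes this theorem from \cite{Soto Ccapa} without reproducing a proof, so there is no internal argument to compare against; your rank-one--update computation is the standard route, and the algebra (conjugating by $S$ to reach $J(A)+\mathbf{e}_1\mathbf{r}^T$, then wiping out the first-row tail by the block-triangular $T$ built from a solution of $\mathbf{v}^T(\mu I-J_0)=-\mathbf{w}^T$) is correct.

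The genuine gap is exactly the step you flag, but your justification of it is not adequate. Requiring that $S\mathbf{e}_1=\mathbf{e}$ is \emph{not} merely a rescaling of the leading Jordan vector once $\lambda_1$ is allowed to have algebraic multiplicity greater than one: $\mathbf{e}$ may lie in $\operatorname{Im}(A-\lambda_1 I)$, in which case no Jordan basis can have $\mathbf{e}$ heading a $J_1(\lambda_1)$ block, and the theorem as literally stated actually fails. For instance,
\begin{equation*}
A=\begin{pmatrix}0&1&0\\-1&2&0\\-1&1&1\end{pmatrix}\in\mathcal{CS}_1,\qquad J(A)=\operatorname{diag}\bigl(J_1(1),J_2(1)\bigr),\qquad \mathbf{q}=\begin{pmatrix}1\\0\\0\end{pmatrix},
\end{equation*}
satisfies $\lambda_1+\sum q_i=2\neq 1=\lambda_2=\lambda_3$, yet
\begin{equation*}
A+\mathbf{e}\mathbf{q}^T=\begin{pmatrix}1&1&0\\0&2&0\\0&1&1\end{pmatrix}
\end{equation*}
has $\operatorname{rank}(A+\mathbf{e}\mathbf{q}^T-I)=1$ and hence JCF $\operatorname{diag}(2,1,1)$, not $\operatorname{diag}(2,J_2(1))=J(A)+E_{11}$. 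Here $\operatorname{Im}(A-I)=\operatorname{span}\{\mathbf{e}\}$, so the normalization $S\mathbf{e}_1=\mathbf{e}$ is impossible. Your argument is complete once you add the hypothesis that $\lambda_1$ is simple (then the $\lambda_1$-eigenspace is one-dimensional and the first column of $S$ is automatically a scalar multiple of $\mathbf{e}$), or more generally that $\mathbf{e}\notin\operatorname{Im}(A-\lambda_1 I)$. This costs nothing in the present paper, since the only place the theorem is invoked (in the proof of Theorem 3.1) already assumes $\lambda_1$ simple, but the proof as written should state the assumption rather than assert a rescaling that need not exist.
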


This paper is organized as follows: In Section $2,$ we extend the
co-spectrality between a nonnegative matrix $A$ and a nonnegative matrix $B$
with constant row sums to a similarity between $A$ and $B,$ when the Perron
eigenvalue is simple. In Section $3,$ we show that if a list of complex
numbers $\Lambda =\{\lambda _{1},\lambda_{2},\ldots ,\lambda _{n}\}$ is $\mathcal{UR},$
then $\{\lambda _{1}+\epsilon ,\lambda _{2},\ldots
,\lambda _{n}\}$ is also $\mathcal{UR}$ for any $\epsilon >0.$ We also
consider the universal realizability of the Guo perturbation $\{\lambda _{1}+\epsilon ,\lambda _{2}-\epsilon
,\lambda _{3},\ldots ,\lambda _{n}\},$ and of the union of two universally
realizable lists $\Lambda _{1}$ and $\Lambda _{2}.$ In Section $4,$ we study the nonsymmetric realizablity of lists of size $5$ with trace zero and three negative elements.

\section{Nonnegative matrices similar to nonnegative matrices with constant row sums}

\noindent It is well known that if $A$ is an irreducible nonnegative matrix,
then $A$ has a positive eigenvector associated to its Perron eigenvalue. In
this section, we extend this result to reducible matrices under certain
conditions. As a consequence, in both cases, $A$ is similar to a nonnegative
matrix $B$ with constant row sums when the Perron eigenvalue is simple. In
this way, we extend a result attributed to Johnson \cite{Johnson},
about the co-spectrality between a nonnegative matrix $A$ and a nonnegative
matrix $B\in \mathcal{CS}_{\lambda _{1}}.$

\begin{lemma}
\label{lema1} Let $A\in M_{n}$ be a nonnegative matrix of the form 
\begin{equation*}
A=%
\begin{bmatrix}
A_{1} & 0 \\ 
A_{3} & A_{2}%
\end{bmatrix}%
,
\end{equation*}%
with $A_{1}\in \mathcal{CS}_{\lambda _{1}}$, $A_{3}\neq 0,$ $A_{2}$
irreducible and $\lambda _{1}=\rho (A)=\rho (A_{1})>\rho (A_{2})$. Then $A$
has a positive eigenvector associated to $\lambda _{1}$. Moreover, there
exists a nonnegative matrix $B\in \mathcal{CS}_{\lambda _{1}}$ similar to $%
A. $
\end{lemma}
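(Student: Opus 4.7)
The plan is to explicitly construct a positive eigenvector of $A$ for $\lambda_1$ by solving for its top and bottom blocks separately, and then obtain $B$ by a diagonal similarity.

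First, I will look for an eigenvector of the form $\mathbf{x} = \begin{bmatrix}\mathbf{u}\\ \mathbf{v}\end{bmatrix}$ with $\mathbf{u}$ matching the size of $A_1$. The block equation $A\mathbf{x}=\lambda_1\mathbf{x}$ splits as $A_1\mathbf{u}=\lambda_1\mathbf{u}$ and $A_3\mathbf{u}+A_2\mathbf{v}=\lambda_1\mathbf{v}$. The natural choice is $\mathbf{u}=\mathbf{e}$, since $A_1\in\mathcal{CS}_{\lambda_1}$; then $\mathbf{v}$ must satisfy $(\lambda_1 I - A_2)\mathbf{v}=A_3\mathbf{e}$.

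The crux is to show that $(\lambda_1 I - A_2)^{-1}$ exists and is positive. Here I invoke Theorem \ref{Berman}: the matrix $\lambda_1 I - A_2$ has nonpositive off-diagonal entries and is irreducible (because $A_2$ is). Since $A_2$ is irreducible and nonnegative, by Perron--Frobenius it has a positive eigenvector $\mathbf{w}>0$ with $A_2\mathbf{w}=\rho(A_2)\mathbf{w}$, so $(\lambda_1 I - A_2)\mathbf{w}=(\lambda_1-\rho(A_2))\mathbf{w}>0$ thanks to the strict inequality $\lambda_1>\rho(A_2)$. Condition $(ii)$ of Theorem \ref{Berman} then yields that $\lambda_1 I - A_2$ is a nonsingular $M$-matrix, and condition $(i)$ gives $(\lambda_1 I - A_2)^{-1}>0$. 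Consequently $\mathbf{v}=(\lambda_1 I - A_2)^{-1}A_3\mathbf{e}$ is well defined; since $A_3\geq 0$ and $A_3\neq 0$ force $A_3\mathbf{e}\geq 0$ and $A_3\mathbf{e}\neq 0$, multiplying by the positive matrix $(\lambda_1 I - A_2)^{-1}$ yields $\mathbf{v}>0$. Thus $\mathbf{x}>0$ is the desired positive eigenvector of $A$.

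For the second assertion, I set $D=\mathrm{diag}(\mathbf{x})$ and put $B=D^{-1}AD$. Clearly $B$ is similar to $A$, and $B\geq 0$ since $D,D^{-1}>0$ and $A\geq 0$. Finally, $B\mathbf{e}=D^{-1}A\mathbf{x}=D^{-1}(\lambda_1\mathbf{x})=\lambda_1\mathbf{e}$, so $B\in\mathcal{CS}_{\lambda_1}$.

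The only step that needs care is verifying the hypotheses of Theorem \ref{Berman} for $\lambda_1 I - A_2$ and checking that the resulting $\mathbf{v}$ is strictly positive rather than merely nonnegative; everything else is a routine block computation followed by a standard diagonal conjugation trick.
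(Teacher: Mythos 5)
Your proof is correct and takes essentially the same route as the paper: choose $\mathbf{u}=\mathbf{e}$, solve $(\lambda_1 I-A_2)\mathbf{v}=A_3\mathbf{e}$ using positivity of $(\lambda_1 I-A_2)^{-1}$ from Theorem \ref{Berman}, and then conjugate by the diagonal matrix of the positive eigenvector. You actually supply a bit more detail than the paper in two spots — explicitly checking hypothesis $(ii)$ of Theorem \ref{Berman} via the Perron vector $\mathbf{w}$ of $A_2$, and spelling out why $A_3\neq 0$ forces $A_3\mathbf{e}\neq 0$ so that $\mathbf{v}$ is \emph{strictly} positive — but the structure is identical.
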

\begin{proof}
Let $A_{1}\in M_{k}$ and $A_{2}\in M_{n-k}$. Let $\mathbf{x}=%
\begin{bmatrix}
\mathbf{e} \\ 
\mathbf{y}%
\end{bmatrix}%
$ with $\mathbf{e}\in M_{k,1},$ $\mathbf{y}\in M_{n-k,1}$. Then, for%
\begin{equation*}
\begin{bmatrix}
A_{1} & 0 \\ 
A_{3} & A_{2}%
\end{bmatrix}%
\begin{bmatrix}
\mathbf{e} \\ 
\mathbf{y}%
\end{bmatrix}%
=%
\begin{bmatrix}
A_{1}\mathbf{e} \\ 
A_{3}\mathbf{e}+A_{2}\mathbf{y}%
\end{bmatrix}%
=%
\begin{bmatrix}
\lambda _{1}\mathbf{e} \\ 
\lambda _{1}\mathbf{y}%
\end{bmatrix}%
,
\end{equation*}\ \\
we have \ $A_{3}\mathbf{e}=(\lambda _{1}I-A_{2})\mathbf{y},$ where $\lambda _{1}I-A_{2}$ is an irreducible nonsingular $M$-matrix. Then,
from Theorem \ref{Berman}, $(\lambda _{1}I-A_{2})^{-1}>0$. Therefore, 
\begin{equation}
\mathbf{y}=(\lambda _{1}I-A_{2})^{-1}(A_{3}\mathbf{e})>0,  \label{y}
\end{equation}%
and so $\mathbf{x}^{T}=\left[ \mathbf{e}^{T},\mathbf{y}^{T}\right] =\left[
x_{1},\ldots ,x_{n}\right] $ is positive$.$ Then, for $D=diag\left(
x_{1},\ldots ,x_{n}\right) ,$ $B=D^{-1}AD$ is similar to $A.$ Since 
\begin{equation*}
B\mathbf{e}=D^{-1}AD\mathbf{e}=\lambda _{1}\mathbf{e,}
\end{equation*}%
then $B\in \mathcal{CS}_{\lambda _{1}}.$
\end{proof}

\begin{remark} Note that the eigenvector $\mathbf{x}$ obtained in the proof of Lemma \ref{lema1} is  $\mathbf{x}^{T}=[\mathbf{e}^{T},\mathbf{y}^{T}]$, where $\mathbf{e}$ has the number of rows $A_{1}$ and \begin{align*}\mathbf{y}=(\lambda_{1}I-A_{2})^{-1}(A_{3}\mathbf{e})=[y_{1},\ldots,y_{n-k}]^{T}>0.\end{align*} Let $Y=diag(y_{1},\ldots,y_{n-k})$, then a matrix $B\in \mathcal{CS}_{\lambda_{1}}$ similar to $A$ is of the form \[B=\begin{bmatrix}A_{1}& 0 \\ Y^{-1}A_{3} & Y^{-1}A_{2}Y\end{bmatrix}.\] 
\end{remark}

Note that in Lemma \ref{lema1} it is not necessary that the
spectral radius of $A$ be simple, as shown in matrix%
\begin{equation*}
A=\left[\begin{array}{cc|c}2&0&0\\0&2&0\\\hline2&0&1\end{array}\right],
\end{equation*}%
which has a positive eigenvector $\left[ 1,1,2\right]$ associated to
the double eigenvalue $\lambda _{1}=2$.

Now, suppose that $A$ is a block
diagonal matrix. Then, for this case, we have the following result:

\begin{lemma}
\label{lema2} Let $A\in M_{n}$ be a nonnegative matrix of the form 
\begin{equation*}
A=%
\begin{bmatrix}
A_{1} & 0 \\ 
0 & A_{2}%
\end{bmatrix}%
,
\end{equation*}%
with $A_{1}\in \mathcal{CS}_{\lambda _{1}},$ $A_{2}$ irreducible and $%
\lambda _{1}=\rho (A)=\rho (A_{1})>\rho (A_{2}).$ Then $A$ is similar to a
nonnegative matrix $\widetilde{A}=%
\begin{bmatrix}
A_{1} & 0 \\ 
A_{3} & A_{2}%
\end{bmatrix}%
$, with $A_{3}\neq 0.$ Moreover, there exists a nonnegative matrix $B\in 
\mathcal{CS}_{\lambda _{1}}$ similar to $A.$
\end{lemma}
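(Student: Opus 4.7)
The strategy is to construct an explicit similarity that introduces a nonnegative, nonzero $(2,1)$-block into $A$, thereby reducing the statement to Lemma \ref{lema1}. A block unitriangular similarity $S=\begin{bmatrix} I & 0\\ -C & I\end{bmatrix}$, with inverse $S^{-1}=\begin{bmatrix} I & 0\\ C & I\end{bmatrix}$, satisfies
\begin{equation*}
S^{-1}A\,S=\begin{bmatrix} A_{1} & 0\\ CA_{1}-A_{2}C & A_{2}\end{bmatrix},
\end{equation*}
so the task becomes: choose $C\in M_{n-k,k}$ so that $A_{3}:=CA_{1}-A_{2}C$ is nonnegative and nonzero.

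The key idea is to take $C$ of rank one, aligned with a left Perron eigenvector of $A_{1}$. Since $A_{1}\geq 0$ has spectral radius $\lambda_{1}$, Perron--Frobenius supplies a nonzero $\mathbf{v}\geq 0$ with $\mathbf{v}^{T}A_{1}=\lambda_{1}\mathbf{v}^{T}$. Since $A_{2}$ is irreducible with $\rho(A_{2})<\lambda_{1}$, the matrix $\lambda_{1}I-A_{2}$ is an irreducible nonsingular $M$-matrix, so Theorem \ref{Berman} gives $(\lambda_{1}I-A_{2})^{-1}>0$. Picking any positive $\mathbf{w}\in M_{n-k,1}$, set $\mathbf{u}=(\lambda_{1}I-A_{2})^{-1}\mathbf{w}>0$ and $C=\mathbf{u}\mathbf{v}^{T}$. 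Then the identity $\mathbf{v}^{T}A_{1}=\lambda_{1}\mathbf{v}^{T}$ collapses the Sylvester-type expression to
\begin{equation*}
CA_{1}-A_{2}C=(\lambda_{1}I-A_{2})\mathbf{u}\,\mathbf{v}^{T}=\mathbf{w}\mathbf{v}^{T}\geq 0,
\end{equation*}
which is nonzero because $\mathbf{w}>0$ and $\mathbf{v}\neq 0$. Hence $\widetilde{A}:=S^{-1}AS$ is nonnegative and has exactly the form claimed, with $A_{3}=\mathbf{w}\mathbf{v}^{T}\neq 0$.

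To conclude, I verify that $\widetilde{A}$ satisfies every hypothesis of Lemma \ref{lema1}: it is nonnegative, its $(1,1)$-block is $A_{1}\in\mathcal{CS}_{\lambda_{1}}$, its $(2,2)$-block $A_{2}$ is irreducible, its $(2,1)$-block is nonzero, and $\rho(\widetilde{A})=\rho(A)=\lambda_{1}$ since similarity preserves the spectrum. Lemma \ref{lema1} then provides a nonnegative $B\in\mathcal{CS}_{\lambda_{1}}$ similar to $\widetilde{A}$, and transitivity of similarity gives $B$ similar to $A$. The only genuinely delicate step is solving the Sylvester-type inequality $CA_{1}-A_{2}C\geq 0$ with a nonzero right-hand side; a generic $C$ will not work, and the whole point of the rank-one ansatz tied to the left Perron eigenvector of $A_{1}$ is that it reduces the inequality to the nonnegativity of $(\lambda_{1}I-A_{2})\mathbf{u}$, which is delivered by the $M$-matrix inverse positivity in Theorem \ref{Berman}.
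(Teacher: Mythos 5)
Your proof is correct and follows essentially the same route as the paper's: conjugate by a block unitriangular matrix whose $(2,1)$-block is a rank-one matrix built from a left Perron eigenvector of $A_1$, then invoke Lemma \ref{lema1}. The only cosmetic difference is the choice of the column vector in the rank-one factor: the paper assumes WLOG that $A_2\in\mathcal{CS}_{\rho(A_2)}$ and uses $Z=\mathbf{e}\mathbf{z}^{T}$, while you avoid the normalization by taking $C=\mathbf{u}\mathbf{v}^{T}$ with $\mathbf{u}=(\lambda_{1}I-A_{2})^{-1}\mathbf{w}>0$ via $M$-matrix inverse positivity; both collapse $CA_1-A_2C$ to a nonzero nonnegative rank-one matrix.
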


\begin{proof}
Let $A_{1}\in M_{k}$ and $A_{2}\in M_{n-k}$. We suppose, without loss of
generality, that $A_{2}\in \mathcal{CS}_{\rho (A_{2})}$. Define the
nonsingular matrix%
\begin{equation*}
S=%
\begin{bmatrix}
I_{k} & 0 \\ 
-Z & I_{n-k}%
\end{bmatrix}%
,\ \ \text{with}\ \ S^{-1}=%
\begin{bmatrix}
I_{k} & 0 \\ 
Z & I_{n-k}%
\end{bmatrix}%
,
\end{equation*}%
where $Z=\mathbf{e}\mathbf{z}^{T}\in M_{n-k,k}$, with $\mathbf{z}$ being an
eigenvector of $A_{1}^{T}$ associated to $\lambda _{1}$. Then 
\begin{equation*}
\widetilde{A}=S^{-1}AS=%
\begin{bmatrix}
A_{1} & 0 \\ 
ZA_{1}-A_{2}Z & A_{2}%
\end{bmatrix}%
.
\end{equation*}%
We show that $A_{3}=ZA_{1}-A_{2}Z$ is a nonzero nonnegative matrix. The
entry in position $(r,j)$ of the matrix $A_{3}$ is, 
\begin{align*}
\mathbf{e}_{r}^{T}(ZA_{1}-A_{2}Z)\mathbf{e}_{j}& =\mathbf{z}%
^{T}col_{j}(A_{1})-z_{j}row_{r}(A_{2})\mathbf{e} \\
& =\sum_{i=1}^{k}a_{ij}z_{i}-z_{j}\rho (A_{2}),
\end{align*}%
for all $r=1,\ldots ,n-k,$ $j=1,\ldots ,k.$ Therefore, $ZA_{1}-A_{2}Z$ has
all its rows equal, which can be expressed as 
\begin{equation}
(A_{1}^{T}-\rho (A_{2})I_{k})\mathbf{z}.  \label{z1}
\end{equation}%
Since $A_{1}^{T}-\rho (A_{2})I_{k}$ and $A_{1}^{T}$ have the same
eigenvectors, then from (\ref{z1}) 
\begin{align*}
(A_{1}^{T}-\rho (A_{2})I_{k})\mathbf{z}& =A_{1}^{T}\mathbf{z}-\rho (A_{2})%
\mathbf{z} \\
& =\lambda _{1}\mathbf{z}-\rho (A_{2})\mathbf{z} \\
& =(\lambda _{1}-\rho (A_{2}))\mathbf{z}\geq 0.
\end{align*}%
Therefore $A_{3}=ZA_{1}-A_{2}Z$ is a nonzero nonnegative matrix. Since $A$
and $\widetilde{A}$ are similar with $A_{3}$ nonzero nonnegative, then from
Lemma \ref{lema1} there exists a nonnegative matrix $B\in \mathcal{CS}%
_{\lambda _{1}}$ similar to $A.$
\end{proof}

\begin{remark}
Note that the matrix $A_{3}$ in the proof of Lemma \ref{lema2} is \begin{align}\label{A3}A_{3}=\mathbf{ez}^{T}A_{1}-A_{2}\mathbf{ez}^{T},\end{align}
with $\mathbf{z}$ being an eigenvector of $A_{1}^{T}$ associated to $\lambda_{1}$. Then, from Lemma \ref{lema1}, $\widetilde{A}$ has a  positive eigenvector $\mathbf{x}=[\mathbf{e}^{T},\mathbf{y}^{T}]$ associated to $\lambda_{1}$, where \begin{align*}\mathbf{y}=(\lambda_{1}I-A_{2})^{-1}(A_{3}\mathbf{e})=[y_{1},\ldots,y_{n-k}]^{T}, \ \text{with} \ A_{3} \ \text{as in \eqref{A3}}. \end{align*} Let $Y=diag\{y_{1},\ldots,y_{n-k}\}$, then a matrix $B\in \mathcal{CS}_{\lambda_{1}}$ similar to $\widetilde{A}$ is of the form \[B=\begin{bmatrix}A_{1}& 0 \\ Y^{-1}A_{3} & Y^{-1}A_{2}Y\end{bmatrix}.\]  
\end{remark}

Next we prove the main result in this section. This result extends
the co-spectrality between a nonnegative matrix $A$ and a nonnegative matrix 
$B\in \mathcal{CS}_{\lambda _{1}},$ to a similarity between $A$ and $B.$

\begin{theorem}
\label{Teorema} Let $A\in M_{n}$ be a nonnegative matrix with $\lambda
_{1}=\rho (A)$ simple. Then there exists a nonnegative matrix $B\in \mathcal{%
CS}_{\lambda _{1}}$ similar to $A.$
\end{theorem}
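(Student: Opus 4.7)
The plan is to proceed by induction on $n$, using Lemmas \ref{lema1} and \ref{lema2} as the main technical tools. The base case $n=1$ is immediate since $A=[\lambda_1]$ lies in $\mathcal{CS}_{\lambda_1}$.

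For the inductive step, if $A$ is irreducible then Perron--Frobenius furnishes a strictly positive eigenvector $\mathbf{x}>0$ with $A\mathbf{x}=\lambda_1\mathbf{x}$, and $B=D^{-1}AD$ with $D=diag(\mathbf{x})$ is a nonnegative matrix in $\mathcal{CS}_{\lambda_1}$ similar to $A$. When $A$ is reducible, I would put $A$ (up to permutation similarity) in Frobenius normal form: block lower triangular with irreducible diagonal blocks $A_{11},\ldots,A_{mm}$, exactly one of which (say $A_{kk}$) has spectral radius $\lambda_1$, all others being strictly smaller. A topological argument on the block DAG lets me choose a decomposition
\[
A=\begin{bmatrix} A_1 & 0 \\ A_3 & A_2 \end{bmatrix}
\]
in which $A_2$ is an irreducible diagonal block with $\rho(A_2)<\lambda_1$, so that $A_1$ (of smaller size) still contains the unique Perron block and inherits the simplicity of $\lambda_1$. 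The inductive hypothesis reduces $A_1$ to some $\widetilde{A}_1\in\mathcal{CS}_{\lambda_1}$; after extending the similarity to the full matrix, $A$ becomes $\begin{bmatrix} \widetilde{A}_1 & 0 \\ \widehat{A}_3 & A_2 \end{bmatrix}$, to which Lemma \ref{lema1} applies directly if $\widehat{A}_3\neq 0$, or Lemma \ref{lema2} followed by Lemma \ref{lema1} if $\widehat{A}_3=0$ (the block diagonal case).

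The main obstacle I anticipate is keeping the off-diagonal block $A_3$ nonnegative when extending the similarity on $A_1$ to the full matrix: a generic nonsingular $S_1$ realizing $\widetilde{A}_1=S_1^{-1}A_1S_1$ need not satisfy $A_3 S_1\geq 0$. I would address this by restricting the inductive construction to compositions of the two kinds of similarity appearing in Lemmas \ref{lema1} and \ref{lema2}---positive diagonal matrices and unipotent matrices of the form $\begin{bmatrix} I & 0 \\ -Z & I \end{bmatrix}$ with $Z=\mathbf{e}\mathbf{z}^T$---and tracking their action on $A_3$ at each stage. A degenerate configuration arises when the Perron block is the unique topological sink of the block DAG, which prevents the above choice of $A_2$; this is handled separately by a symmetric construction, obtained by permuting to swap the two parts of the matrix and applying the analogue of Lemma \ref{lema2} from the opposite side.
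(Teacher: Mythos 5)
Your approach uses the same ingredients as the paper---Frobenius normal form plus Lemmas \ref{lema1} and \ref{lema2}---but runs the recursion in the opposite direction, and that reversal opens a genuine gap that your sketch does not close. The paper processes the diagonal blocks starting from the Perron block in the upper-left corner: up to the point where only the isolated blocks remain, the conjugations come from Lemma \ref{lema1} and are \emph{positive diagonal}, so the lower block rows are merely rescaled and stay nonnegative; the unipotent factor $\begin{bmatrix}I&0\\-\mathbf{e}\mathbf{z}^T&I\end{bmatrix}$ from Lemma \ref{lema2} is introduced only when everything strictly below the current frontier is already zero, so there is no residual $A_3$ to damage. In your bottom-up induction you peel off the last block $A_2$, apply the inductive hypothesis to $A_1$, and must then keep $\widehat{A}_3=A_3S_1$ nonnegative. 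Your proposed repair---noting that $S_1$ is built from positive diagonals and these unipotent factors and ``tracking'' their action---does not work, because the unipotent factors are precisely what creates negatives. If $S_1$ contains a factor $\begin{bmatrix}I&0\\-\mathbf{e}\mathbf{z}^T&I\end{bmatrix}$ conforming to a split $\{1,\dots,j\}\cup\{j+1,\dots\}$ of the blocks of $A_1$, and the peeled-off block receives an arc from some block in $\{j+1,\dots\}$, then the first part of $A_3S_1$ is $A_{m,1:j}-A_{m,j+1:}\,\mathbf{e}\mathbf{z}^T$, which need not be nonnegative. Already four diagonal blocks with arcs $1\to 2$ and $3\to 4$, with block $1$ Perron, give $A_3=[\,0,0,A_{43}\,]$ and $A_3S_1=[\,-A_{43}\mathbf{e}\mathbf{z}^T,\,A_{43}\,]$, which has negative entries whenever $A_{43}>0$ and $\mathbf{z}\neq 0$. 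The paper's top-down ordering is exactly what prevents this collision, and your induction on $A_1$ destroys that invariant.

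The second obstruction you flag---the Perron block being the unique sink of the condensation DAG---is also real, but ``a symmetric construction obtained by permuting to swap the two parts'' is not yet an argument. Lemma \ref{lema1} yields a \emph{positive} Perron eigenvector of the form $[\mathbf{e}^T,\mathbf{y}^T]^T$ precisely because the Perron block is the \emph{leading} block of a lower block-triangular form; when it is a sink, the right Perron eigenvector of $A$ vanishes on all the other blocks and cannot be made positive, so no positive diagonal similarity (and no literal reflection of Lemma \ref{lema1}) lands $A$ in $\mathcal{CS}_{\lambda_1}$. Handling this genuinely requires a different device (working with left eigenvectors / column sums, or mixing upper- and lower-unipotent factors), which your sketch leaves out. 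For what it is worth, the published proof does not linger on this point either: it simply declares ``without loss of generality'' that $\lambda_1$ is an eigenvalue of $A_{11}$, which is only permutationally achievable when the Perron block is a source, so you have in fact put your finger on a step that deserves more care than the paper gives it---but flagging a difficulty is not the same as resolving it.
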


\begin{proof}
If $A$ is irreducible, then $A$ has a positive eigenvector $\mathbf{x}=\left[
x_{1},\ldots ,x_{n}\right] ^{T}$ associated to $\lambda _{1}\mathbf{.}$ Let $%
D=diag\left( x_{1},\ldots ,x_{n}\right) $. Then $B=D^{-1}AD\in \mathcal{CS}%
_{\lambda _{1}}$ is nonnegative and similar to $A$.

If $A$ is reducible, then $A$ is permutationally similar to 
\begin{equation*}
\widetilde{A}=%
\begin{bmatrix}
A_{11} &  &  &  &  &  &  \\ 
A_{21} & A_{22} &  &  &  &  &  \\ 
\vdots  & \ddots  & \ddots  &  &  &  &  \\ 
A_{k1} & \cdots  & A_{k,k-1} & A_{kk} &  &  &  \\ 
0 & \cdots  & 0 & 0 & A_{k+1,k+1} &  &  \\ 
\vdots  &   & \vdots  & \vdots  & \ddots  & \ddots  &  \\ 
0 & \cdots  & 0 & 0 & \cdots  & 0 & A_{k+r,k+r}%
\end{bmatrix}%
\end{equation*}%
with blocks $A_{ii}$ irreducible of order $n_{i},$ or zero of size $1\times
1,$ such that $\sum\limits_{i=1}^{k+r}n_{i}=n,$ and $%
\begin{bmatrix}
A_{i1} & A_{i2} & \cdots A_{i,i-1}%
\end{bmatrix}%
$ nonzero, $i=2,\dots ,k$. We may assume, without loss of generality, that $%
\lambda _{1}$ is an eigenvalue of $A_{11}\in \mathcal{CS}_{\lambda _{1}},$
and $A_{ii}\in \mathcal{CS}_{\rho (A_{ii})}$, $i=2,3,\ldots ,k+r$.

From Lemma \ref{lema1}, the submatrix 
\begin{equation*}
A_{1}=%
\begin{bmatrix}
A_{11} & 0 \\ 
A_{21} & A_{22}%
\end{bmatrix}%
,
\end{equation*}%
in the left upper corner of $\widetilde{A},$ is similar to a nonnegative
matrix $B_{1}\in \mathcal{CS}_{\lambda _{1}},$ with $%
B_{1}=D_{1}{}^{-1}A_{1}D_{1}$.

We define $\widetilde{D}_{1}=%
\begin{bmatrix}
D_{1} &  \\ 
& I_{n-(n_{1}+n_{2})}%
\end{bmatrix}%
$. Then%
\begin{equation*}
\widetilde{D}_{1}^{-1}\widetilde{A}\widetilde{D}_{1}=%
\begin{bmatrix}
B_{1} &  &  &  &  &  &  \\ 
\ast  & A_{33} &  &  &  &  &  \\ 
\vdots  & \ddots  & \ddots  &  &  &  &  \\ 
\ast  & \cdots  & \ast  & A_{kk} &  &  &  \\ 
0 & \cdots  & 0 & 0 & A_{k+1,k+1} &  &  \\ 
\vdots  &  & \vdots  & \vdots  & \ddots  & \ddots  &  \\ 
0 & \cdots  & 0 & 0 & \cdots  & 0 & A_{k+r,k+r}%
\end{bmatrix}%
.
\end{equation*}%
Again, from Lemma \ref{lema1}, the left upper corner submatrix of $%
\widetilde{D}_{1}^{-1}\widetilde{A}\widetilde{D}_{1},$%
\begin{equation*}
A_{2}=%
\begin{bmatrix}
B_{1} & 0 \\ 
\ast  & A_{33}%
\end{bmatrix}%
,
\end{equation*}%
is similar to a nonnegative matrix $B_{2}\in \mathcal{CS}_{\lambda _{1}},$
with $B_{2}=D_{2}{}^{-1}A_{2}D_{2}.$ Then we define $\widetilde{D}_{2}=%
\begin{bmatrix}
D_{2} &  \\ 
& I_{n-(n_{1}+n_{2}+n_{3})}%
\end{bmatrix}%
$ and we obtain
\begin{equation*}
\widetilde{D}_{2}^{-1}\widetilde{D}_{1}^{-1}\widetilde{A}\widetilde{D}_{1}%
\widetilde{D}_{2}=%
\begin{bmatrix}
B_{2} &  &  &  &  &  &  \\ 
\ast  & A_{44} &  &  &  &  &  \\ 
\vdots  & \ddots  & \ddots  &  &  &  &  \\ 
\ast  & \cdots  & \ast  & A_{kk} &  &  &  \\ 
0 & \cdots  & 0 & 0 & A_{k+1,k+1} &  &  \\ 
\vdots  &   & \vdots  & \vdots  & \ddots  & \ddots  &  \\ 
0 & \cdots  & 0 & 0 & \cdots  & 0 & A_{k+r,k+r}%
\end{bmatrix}%
.
\end{equation*}%
Proceeding in a similar way, after $k-1$ steps, we obtain 
\begin{equation*}
\widetilde{D}_{k-1}^{-1}\cdots \widetilde{D}_{1}^{-1}\widetilde{A}\widetilde{%
D}_{1}\cdots \widetilde{D}_{k-1}=%
\begin{bmatrix}
B_{k-1} &  &  &  \\ 
& A_{k+1,k+1} &  &  \\ 
&  & \ddots  &  \\ 
&  &  & A_{k+r,k+r}%
\end{bmatrix}%
,
\end{equation*}%
which is a block diagonal matrix, with $B_{k-1}\in \mathcal{CS}_{\lambda
_{1}}$. Now, from Lemma \ref{lema2}, the submatrix 
\begin{equation*}
A_{k}^{\prime }=%
\begin{bmatrix}
B_{k-1} &  \\ 
& A_{k+1,k+1}%
\end{bmatrix}%
,
\end{equation*}%
is similar to a nonnegative matrix $B_{k}^{\prime }\in \mathcal{CS}_{\lambda
_{1}},$ $B_{k}^{\prime }=D_{k}{}^{-1}S_{k}{}^{-1}A_{k}^{\prime }S_{k}D_{k},$
where $S_{k}=%
\begin{bmatrix}
I_{n_{1}+\cdots +n_{k}} &  \\ 
-\mathbf{e}\mathbf{z}_{k}^{T} & I_{k+1,k+1}%
\end{bmatrix}%
,$ with $\mathbf{z}_{k}$ being an eigenvector of $B_{k-1}^{T}$ associated to $%
\lambda _{1}.$\newline
We define $\widetilde{D}_{k}=%
\begin{bmatrix}
S_{k}D_{k} &  \\ 
& I_{n-(n_{1}+\cdots +n_{k+1})}%
\end{bmatrix}%
.$ Then, 
\begin{equation*}
\widetilde{D}_{k}^{-1}\cdots \widetilde{D}_{1}^{-1}\widetilde{A}\widetilde{D}%
_{1}\cdots \widetilde{D}_{k}=%
\begin{bmatrix}
B_{k}^{^{\prime }} &  &  &  \\ 
& A_{k+2,k+2} &  &  \\ 
&  & \ddots  &  \\ 
&  &  & A_{k+r,k+r}%
\end{bmatrix}%
.
\end{equation*}%
Proceeding in a similar way, after $r-1$ steps, we obtain a nonnegative
matrix $B\in \mathcal{CS}_{\lambda _{1}}$ similar to $A$.
\end{proof}

\begin{remark}
Note that the condition of simple Perron eigenvalue cannot be deleted from
Theorem \ref{Teorema}, as shown in matrix%
\begin{equation*}
\left[ 
\begin{array}{cc}
1 & 0 \\ 
1 & 1%
\end{array}%
\right] .
\end{equation*}%
Observe also that this means that it is not always possible to work with
matrices with constant row sums in the NIEDP, this fact does not apply to the NIEP.
\end{remark}

\section{Perturbation of universally realizable lists}

Guo in 1997 \cite{Guo} proved that increasing the Perron eigenvalue of a realizable list preserves the realizability. 
We extend this result to $\mathcal{UR}$ lists.

\begin{theorem}
Let $\Lambda =\{\lambda _{1},\lambda _{2},\ldots ,\lambda _{n}\}$ be a list
of complex numbers with $\lambda _{1}$ simple. If $\Lambda $ is $\mathcal{UR}$,  then 
$\Lambda _{\epsilon }=\{\lambda _{1}+\epsilon ,\lambda _{2},\ldots ,\lambda
_{n}\}$ is also $\mathcal{UR}$ for any $\epsilon >0$.
\end{theorem}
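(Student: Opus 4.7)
The plan is to combine Theorem \ref{Teorema} with Theorem \ref{Soto Ccapa}, treating the added $\epsilon$ as a rank-one perturbation on a suitably normalized matrix. Since $\lambda_1$ is the Perron eigenvalue of $\Lambda$ and is simple, the value $\lambda_1 + \epsilon$ is strictly larger than $|\lambda_i|$ for every $i \geq 2$, so $\lambda_1 + \epsilon$ is automatically the (simple) Perron eigenvalue of $\Lambda_\epsilon$. In particular, every Jordan canonical form allowed by $\Lambda_\epsilon$ has the form $J_1(\lambda_1+\epsilon) \oplus \widehat{J}$, and the corresponding form $J_1(\lambda_1) \oplus \widehat{J}$ is a Jordan canonical form allowed by $\Lambda$.

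Fix an arbitrary Jordan canonical form $\mathcal{J}_\epsilon = J_1(\lambda_1+\epsilon) \oplus \widehat{J}$ allowed by $\Lambda_\epsilon$, and let $\mathcal{J} = J_1(\lambda_1) \oplus \widehat{J}$ be the associated form for $\Lambda$. Since $\Lambda$ is $\mathcal{UR}$, there exists a nonnegative matrix $A$ whose JCF is $\mathcal{J}$. Because $\lambda_1$ is simple, Theorem \ref{Teorema} produces a nonnegative matrix $B \in \mathcal{CS}_{\lambda_1}$ similar to $A$; in particular, $J(B) = \mathcal{J}$ and $B \geq 0$.

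Now I would choose $\mathbf{q} = \epsilon \mathbf{e}_1$, so that $\sum_{i=1}^{n} q_i = \epsilon$ and $\mathbf{e}\mathbf{q}^T = \epsilon \mathbf{e}\mathbf{e}_1^T \geq 0$. The hypothesis of Theorem \ref{Soto Ccapa} requires $\lambda_1 + \sum q_i \neq \lambda_i$ for $i = 2,\ldots,n$; this holds because $\lambda_1 + \epsilon$ is real and $|\lambda_i| \leq \lambda_1 < \lambda_1 + \epsilon$ for each $i \geq 2$. Applying the theorem, the matrix
\begin{equation*}
B_\epsilon := B + \epsilon\, \mathbf{e}\mathbf{e}_1^T
\end{equation*}
is nonnegative (it is a sum of nonnegative matrices) and has Jordan canonical form $J(B) + \epsilon E_{11} = \mathcal{J}_\epsilon$. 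Since $\mathcal{J}_\epsilon$ was an arbitrary JCF allowed by $\Lambda_\epsilon$, this shows $\Lambda_\epsilon$ is $\mathcal{UR}$.

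The only subtle point is ensuring that every JCF of $\Lambda_\epsilon$ is produced, and this is where the simplicity of $\lambda_1$ is crucial: it forces the Perron block to be $1\times 1$ both before and after the perturbation, so the map $\mathcal{J} \mapsto \mathcal{J}_\epsilon$ is a bijection between admissible Jordan forms. I do not foresee a genuine obstacle here; the work was already done in Theorem \ref{Teorema} (which replaces the classical co-spectrality statement by a genuine similarity) and in Theorem \ref{Soto Ccapa} (which controls exactly how the JCF changes under a rank-one $\mathbf{e}\mathbf{q}^T$ update of a constant-row-sum matrix).
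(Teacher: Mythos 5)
Your proof is correct and follows essentially the same route as the paper: invoke Theorem \ref{Teorema} to pass to a constant-row-sum representative $B$ with the prescribed JCF, then apply Theorem \ref{Soto Ccapa} to lift the Perron eigenvalue by a nonnegative rank-one update $\mathbf{e}\mathbf{q}^T$ with $\sum q_i = \epsilon$. The only (immaterial) difference is that you take $\mathbf{q} = \epsilon\,\mathbf{e}_1$ while the paper uses $\mathbf{q} = \tfrac{\epsilon}{n}\mathbf{e}$; both choices give a nonnegative perturbation and the same change in JCF.
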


\begin{proof}
Let $\epsilon >0$ and 
$$J_{\epsilon }=J_1(\lambda _{1}+\epsilon )\bigoplus\limits_{i=2}^{k}J_{n_{i}}(\lambda _{i})$$
 be a $JCF$ allowed by $\Lambda _{\epsilon }$. The matrix 
$$J=J_1(\lambda _{1})\bigoplus\limits_{i=2}^{k}J_{n_{i}}(\lambda _{i})$$
is an allowed JCF by $\Lambda $. Because $\Lambda $ is $\mathcal{UR}$, 
there exists a nonnegative matrix $A$ with spectrum $\Lambda $ and Jordan canonical form $J$.
 Besides, from Theorem \ref{Teorema}, there exists a nonnegative matrix $B\in \mathcal{CS}%
_{\lambda _{1}}$ with $J(B)=J$. Then, from Theorem \ref{Soto Ccapa}, for $B$ and $%
\mathbf{q}^{T}=[\frac{\epsilon }{n},\ldots ,\frac{\epsilon }{n}],$  we have that the matrix $%
A_{\epsilon }=B+\mathbf{eq}^{T}$ is nonnegative with spectrum $\Lambda_{\epsilon }$ and $JCF$ 
\begin{equation*}
J(A_{\epsilon })=J(B)+\epsilon E_{11}=J+\epsilon E_{11}=J_1(\lambda
_{1}+\epsilon )\bigoplus\limits_{i=2}^{k}J_{n_{i}}(\lambda _{i}).
\end{equation*}%
Thus, $\Lambda _{\epsilon }$ is $\mathcal{UR}.$
\end{proof}\\

Guo in 1997 \cite{Guo} also proved that increasing by $\epsilon $ a Perron eigenvalue  and decreasing by $\epsilon $
another real eigenvalue of a realizable list preserves the realizability.
Soto and Ccapa in 2008 \cite{Soto Ccapa} proved that a list of real numbers of Sule\v{\i}manova
type, that is, a list $\{\lambda _{1},\lambda _{2},\ldots ,\lambda _{n}\}$ with
$\lambda _{i}\leq 0$ for $i=2,\ldots ,n,$ and $\sum_{i=1}^n\lambda _{1}\geq 0 $,
 is $\mathcal{UR}.$ As a consequence, 
 the perturbed list $\{\lambda _{1}+\epsilon ,\lambda _{2}-
\epsilon ,\lambda _{3},\ldots ,\lambda _{n}\}$ with $\epsilon >0$ is $\mathcal{UR}$
for nonnegative lists $\{\lambda _{1},\lambda_{2},\ldots ,\lambda _{n}\}$ and also for Sule\v{\i}manova type lists $\{\lambda_{1},\lambda_{2},\ldots ,\lambda _{n}\}$.
 As we show below, this is not true for general lists $\{\lambda _{1},\lambda_{2},\ldots ,\lambda _{n}\}$. 
 The construction of a counter-example is based on the study of $\mathcal{UR}$ lists of size 5 with trace zero and three negative elements. 
This construction has been motivated by the work of Cronin and Laffey \cite{Cronin}. They show
that a realizable list
is not necessarily diagonalizably realizable.
In particular, they observe that the lists $\{3+t, 3-t, -2+\epsilon , -2, -2-\epsilon \}$ 
are realizable for small positive values of $\epsilon $ and values of $t$ close to 0.44, 
but they are symmetrically realizable only for $t\geq 1-\epsilon $ {\cite[Theorem 3] {Spector}}.
Note that these lists are  diagonalizably realizable, since the eigenvalues are distinct.
 However, this is not a continuous property in $\epsilon $ 
as Cronin and Laffey   show via the following result.

\begin{proposition}  {\rm \cite{Cronin}} \label{Cronin}
Suppose $\{3+t,3-t,-2,-2,-2\}$ is diagonalizably realizable, then $t\geq 1$.
\end{proposition}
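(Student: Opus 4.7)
The plan is to argue by contradiction: suppose $A$ is a $5\times 5$ diagonalizable nonnegative matrix with spectrum $\{3+t,3-t,-2,-2,-2\}$ and $t<1$. Since $\mathrm{tr}(A)=0$ and $A\geq 0$, every diagonal entry of $A$ vanishes; and when $t>0$ the Perron eigenvalue is simple, so Theorem~\ref{Teorema} allows one to assume $A\mathbf{e}=(3+t)\mathbf{e}$. The edge case $t=0$ is handled separately and easily: the multiset $\{3,3,-2,-2,-2\}$ is not realizable at all, since any nonnegative realization would decompose into at least two Perron blocks with eigenvalue $3$, and each such block would have to absorb one or more $-2$'s, yielding a block with strictly negative trace.

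The key structural observation for $0<t<1$ is that $M:=A+2I$ has rank $2$, because $A$ is diagonalizable and $-2$ is an eigenvalue of (geometric) multiplicity $3$. Thus $M$ is nonnegative, of rank $2$, with constant row sums $5+t$ and constant diagonal $2$. I write its spectral decomposition
\[
M = (5+t)\,\mathbf{e}\,w^T + (5-t)\,x\,y^T,
\]
where $w>0$ is the left Perron eigenvector of $M$ normalized by $w^T\mathbf{e}=1$ and $x,y$ are right and left eigenvectors of $M$ for $5-t$ normalized so that $y^T\mathbf{e}=0$, $y^Tx=1$, $w^Tx=0$. Setting $d_j:=(5-t)y_j$, the diagonal constraint $M_{ii}=2$ yields the clean parametrization
\[
a_{ij}=2+d_j(x_i-x_j)\qquad(i\neq j),
\]
together with the two linear identities $\sum_j d_j=0$ and $\sum_j d_j x_j=5-t$.

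The decisive step is a counting argument. The off-diagonal nonnegativity $a_{ij}\geq 0$ becomes $d_j(x_j-x_i)\leq 2$ for $i\neq j$. For each $j$ with $d_j>0$, specializing $i$ to an index where $x$ attains its minimum yields $d_j(x_j-x_{\min})\leq 2$, and summing over the $k_+:=|\{j:d_j>0\}|$ such indices gives $2k_+\geq \sum_{d_j>0}d_j(x_j-x_{\min})$. A short manipulation using $\sum_j d_j=0$, $\sum_j d_j x_j=5-t$, and the pointwise bound $\sum_{d_j<0}|d_j|\,x_j\geq x_{\min}\sum_{d_j<0}|d_j|$ shows that this right-hand side is at least $5-t$, so $k_+\geq (5-t)/2$. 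The symmetric argument, using $d_j<0$ and $x_{\max}$, yields $k_-\geq (5-t)/2$.

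For $t<1$ both $(5-t)/2>2$, so the integers $k_+$ and $k_-$ are each at least $3$. But the five indices partition disjointly as $\{d_j>0\}\sqcup\{d_j<0\}\sqcup\{d_j=0\}$, forcing $k_++k_-\leq 5$, a contradiction. The main obstacle is finding an inequality sharp enough to reach $t\geq 1$: a naive sum of the extremal constraints only yields $1\leq (x_{\max}-x_{\min})\sum_{y_j>0}y_j\leq (5+t)/(5-t)$, which gives just the weaker bound $t\geq 0$. It is the \emph{integrality} of the sign pattern of $y$ (equivalently of $d$), accessed through $k_\pm\geq 3$ rather than $k_\pm\geq 5/2$, that upgrades the estimate $t\geq 0$ to the claimed $t\geq 1$.
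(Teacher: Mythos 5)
Your argument is correct, and it is worth noting at the outset that the paper under review does not prove this proposition itself---it cites Cronin--Laffey \cite{Cronin}---so there is no in-paper proof to match against. Your reduction via Theorem~\ref{Teorema} is legitimate: similarity preserves the trace, and a nonnegative matrix of trace zero must have zero diagonal, so passing to a co-spectral $B\in\mathcal{CS}_{3+t}$ retains the zero diagonal. The core observation that $M=A+2I$ is nonnegative of rank $2$ with constant diagonal $2$ and constant row sums $5+t$ is also the natural starting point of the Cronin--Laffey analysis; where you diverge is in how you exploit it. Cronin and Laffey derive the bound $t\geq 1$ from algebraic identities extracted from a rank factorization of $M$, whereas your route is more combinatorial: the parametrization $a_{ij}=2+d_j(x_i-x_j)$ together with $\sum_j d_j=0$, $\sum_j d_jx_j=5-t$ turns the off-diagonal nonnegativity into a counting problem for the sign pattern of $d$. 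The estimate $k_\pm\geq(5-t)/2$ on its own is not sharp enough, but your observation that $k_+$ and $k_-$ are integers (hence $\geq 3$ when $t<1$) while $k_++k_-\leq 5$ is the decisive and genuinely new-looking step. Two small points you should make explicit if this is written up: the index $j$ achieving $x_{\min}$ (respectively $x_{\max}$) contributes a zero term to the sum, so the bound $d_j(x_j-x_{\min})\leq 2$ holds for \emph{all} $j$ with $d_j>0$, including the argmin, and the constraint $a_{ij}\geq 0$ is only being used for $i\neq j$; and one should note that $x$ cannot be constant (else $\sum_j d_jx_j=0\neq 5-t$), so the argmin and argmax are genuinely distinct. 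Your dispatch of $t=0$ is also correct: any reducible Frobenius normal form of a realization of $\{3,3,-2,-2,-2\}$ forces some irreducible diagonal block to have negative trace.
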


Note that the list $\{3+t,3-t,-2,-2,-2\}$ represents any list of size $5$ with trace zero, simple Perron eigenvalue and three negative elements all equal, {\it i.e.}, lists of the form
$\{\lambda _{1},\lambda _{2},\lambda _{3},\lambda _{3},\lambda _{3}\}$ with $\lambda _{1}>\lambda _{2}\geq 0> \lambda _{3}$  and $\lambda _{1}+\lambda _{2}+3\lambda _{3}=0$.
This list can be scaled by $-2/\lambda _3$ to 
\begin{equation*}
\left\{\frac{-2\lambda _1}{\lambda _3}, \frac{-2\lambda _2}{\lambda _3},-2,-2,-2\right\}
\end{equation*}%
and taking $t=\frac{-2\lambda _1}{\lambda _3}-3=3+\frac{2\lambda _2}{\lambda _3}$ we have 
\begin{equation*}
\Lambda _{\pm t}=\{3+t,3-t,-2,-2,-2\},\text{ \ }0<t\leq 3.
\end{equation*}%

Analogously:
\begin{itemize}
\item  The list $\Lambda  _t^{t_0}=\{3+t-t_0, 3-t, -2+t_0,-2,-2\}$, with $0<t_0<\min\{1+t,2t\}<2$ and
$0< t \leq 3$,
represents the lists $\{\lambda _1, \lambda _2, \lambda _3, \lambda _4, \lambda _4\}$ with $\lambda _{1}>\lambda _{2}\geq 0 > \lambda _{3}> \lambda _4$  and 
$\lambda _{1}+\lambda _{2}+\lambda _{3}+2\lambda _4=0$ (scaling by $-2/\lambda _4$ and taking $t_0=2-\frac{2\lambda _3}{\lambda _4}$  and  
$t=\frac{-2\lambda _1}{\lambda _4}-3+t_0=3+\frac{2\lambda _2}{\lambda _4}$).
\item
The list $\Lambda  _t^{'t_0}=\{3+t+t_0, 3-t, -2,-2,-2-t_0\}$, 
with $t_0> \max\{0, -2t\}$ and  $-1<t\leq 3$, represents the lists
$\{\lambda _1, \lambda _2, \lambda _3, \lambda _3, \lambda _4\}$ with $\lambda _{1}>\lambda _{2}\geq 0 > \lambda _{3}> \lambda _4>-\lambda _{1}$ and 
$\lambda _{1}+\lambda _{2}+2\lambda _{3}+\lambda _4=0$ (scaling by $-2/\lambda _3$ and taking $t_0=-2+\frac{2\lambda _4}{\lambda _3}$  and  
$t=\frac{-2\lambda _1}{\lambda _3}-3-t_0=3+\frac{2\lambda _2}{\lambda _3}$).
\end{itemize}

We need the following result due to \v{S}migoc:

\begin{lemma} {\rm \cite[Lemma 5]{Smigoc}} \label{Smigoc}
 Suppose $B$ is an $m \times m$ matrix with Jordan canonical form $J(B)$ that contains at least one $1 \times 1$ Jordan block corresponding
 to the eigenvalue c:
 
$J(B)= \begin{bmatrix}
c & 0 \\ 
0  & I(B)%
\end{bmatrix}.
$

\noindent Let ${\bf u}$ and ${\bf v}$, respectively, be  left and right eigenvectors of $B$ associated with the $1 \times 1$ Jordan block in the above canonical form.
Furthermore, we normalize vectors ${\bf u}$ and ${\bf v}$ so that ${\bf u}^T{\bf v}=1$. Let $J(A)$ be a Jordan canonical form for an $n \times n$ matrix

$A= \begin{bmatrix}
A_1 & {\bf a} \\ 
{\bf b}^T  & c%
\end{bmatrix},
$

\noindent where $A_1$ is an $(n-1) \times (n-1)$ matrix and ${\bf a}$ and ${\bf b}$ are vectors in $\mathbb{C}^{n-1}$. Then the matrix

$C= \begin{bmatrix}
A_1 & {\bf a}{\bf u}^T \\ 
{\bf v}{\bf b}^T  & B%
\end{bmatrix}
$

\noindent has Jordan canonical form

$J(C)= \begin{bmatrix}
J(A) & 0 \\ 
0  & I(B)%
\end{bmatrix}.
$
\end{lemma}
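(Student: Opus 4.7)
The plan is to exhibit an explicit similarity that turns $C$ into the block-diagonal matrix $A\oplus I(B)$; taking JCFs will then give $J(C)=J(A)\oplus I(B)$ as claimed. The heart of the construction exploits the hypothesis that $c$ sits in a $1\times 1$ Jordan block of $J(B)$: it lets us pin down a single column-row pair of a Jordan basis of $B$ to match $\mathbf{v}$ and $\mathbf{u}$.

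Concretely, I would first choose an invertible $P$ with $P^{-1}BP=J(B)$ so that $P\mathbf{e}_1=\mathbf{v}$ and $\mathbf{e}_1^T P^{-1}=\mathbf{u}^T$. Since $c$ comes from a $1\times 1$ block, the first column of any Jordan basis for $B$ is a scalar multiple of $\mathbf{v}$, so rescaling that column achieves $P\mathbf{e}_1=\mathbf{v}$; the identity $P^{-1}P=I$ then makes the first row of $P^{-1}$ a left eigenvector paired with $\mathbf{v}$ via pairing $1$, and the normalization $\mathbf{u}^T\mathbf{v}=1$ forces that row to coincide with $\mathbf{u}^T$. From these I read off $\mathbf{u}^T P=\mathbf{e}_1^T$ and $P^{-1}\mathbf{v}=\mathbf{e}_1$. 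Applying the block similarity $\mathrm{diag}(I_{n-1},P)$ to $C$ and using these identities yields
\[
\begin{bmatrix} I_{n-1} & 0 \\ 0 & P^{-1} \end{bmatrix} C \begin{bmatrix} I_{n-1} & 0 \\ 0 & P \end{bmatrix} = \begin{bmatrix} A_1 & \mathbf{a}\mathbf{e}_1^T \\ \mathbf{e}_1\mathbf{b}^T & J(B) \end{bmatrix}.
\]
Now $\mathbf{a}\mathbf{e}_1^T$ has $\mathbf{a}$ in its first column and zeros elsewhere, $\mathbf{e}_1\mathbf{b}^T$ has $\mathbf{b}^T$ in its first row and zeros elsewhere, and $J(B)=\mathrm{diag}(c,I(B))$. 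So the right-hand side is block-diagonal, with top-left block equal to $A$ and bottom-right block equal to $I(B)$. Hence $C$ is similar to $A\oplus I(B)$, and its JCF is $J(A)\oplus I(B)$.

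The main obstacle, and really the only delicate point, is the simultaneous matching of $\mathbf{v}$ with the first column of $P$ and of $\mathbf{u}$ with the first row of $P^{-1}$. The $1\times 1$ block hypothesis is essential here: it reduces the ambiguity in both the right and the left Jordan vector at $c$ to a single scalar, which the pairing $\mathbf{u}^T\mathbf{v}=1$ then pins down. If $c$ were the eigenvalue of a larger Jordan block, the corresponding vectors would be entangled with generalized eigenvectors and the clean direct-sum decomposition above would fail. Everything after Step~1 is direct block-matrix arithmetic.
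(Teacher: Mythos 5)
This lemma is quoted in the paper directly from \v{S}migoc's article (\cite[Lemma 5]{Smigoc}); the paper itself gives no proof, so there is no internal argument to compare against. Your reconstruction is, however, correct, and it is in fact the standard similarity argument one would give: conjugate $C$ by $\operatorname{diag}(I_{n-1},P)$ where $P^{-1}BP=J(B)$ and $P$ has been adjusted so that its first column is $\mathbf{v}$ and the first row of $P^{-1}$ is $\mathbf{u}^T$; then the off-diagonal blocks collapse onto a single column and row, placing $A$ and $I(B)$ on the block diagonal.

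One small point worth tightening in your Step 1: you say ``the first column of any Jordan basis for $B$ is a scalar multiple of $\mathbf{v}$,'' which is not literally true if $c$ has geometric multiplicity greater than one. The correct framing is that the lemma's phrase ``eigenvectors associated with the $1\times 1$ Jordan block in the above canonical form'' already fixes a Jordan basis $P_0$, with $\mathbf{v}$ proportional to $P_0\mathbf{e}_1$ and $\mathbf{u}^T$ proportional to $\mathbf{e}_1^TP_0^{-1}$; since $\mathbf{e}_1^TP_0^{-1}\cdot P_0\mathbf{e}_1=1$, the constraint $\mathbf{u}^T\mathbf{v}=1$ forces the two proportionality constants to be reciprocal, and then $P=P_0\operatorname{diag}(\alpha,I_{m-1})$ achieves all three of your required identities simultaneously. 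The observation that $\operatorname{diag}(\alpha,I_{m-1})$ commutes with $J(B)$ precisely because the $(1,1)$ block is $1\times1$ is the place where the hypothesis enters, and you correctly flag this. With that reformulation the argument is complete and matches what \v{S}migoc proves.
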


We consider the lists $\Lambda  _t^{t_0}=\{3+t-t_0, 3-t, -2+t_0,-2,-2\}$ and $\Lambda  _t^{'t_0}=\{3+t+t_0, 3-t, -2,-2,-2-t_0\}$ 
that have a better behavior than $\Lambda _{\pm t}$ with respect to the Guo result applied to $\mathcal{UR}$.

\begin{theorem} \label{CM}
\noindent i) Let $\Lambda  _t^{t_0}=\{3+t-t_0, 3-t, -2+t_0,-2,-2\}$ with $0<t_0<2$ and  $\, \frac{t_0}{2}<t\leq 3$. \, 
If   $\, \Lambda  _t^{t_0}$ is realizable, then it is  $\mathcal{UR}$.

\noindent ii) Let $\Lambda  _t^{'t_0}=\{3+t+t_0, 3-t, -2,-2,-2-t_0\}$ with $t_0>\max\{0,-2t\}\,$ and  $\,t\leq 3$. \,
 If   $\, \Lambda  _t^{'t_0}$ is realizable, then it is  $\mathcal{UR}$.
\end{theorem}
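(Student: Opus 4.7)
The list $\Lambda$ (which is $\Lambda_t^{t_0}$ for (i) and $\Lambda_t^{'t_0}$ for (ii)) has $-2$ as its unique repeated eigenvalue, of algebraic multiplicity two, the other three being simple. Accordingly $\Lambda$ allows exactly two Jordan canonical forms: the diagonalizable one $\mathcal{J}_d$, in which every eigenvalue occupies a $1\times 1$ block, and $\mathcal{J}_J$, in which $-2$ occupies a single $J_2(-2)$ block while the other three occupy $1\times 1$ blocks. The plan is, assuming $\Lambda$ is realizable, to exhibit a nonnegative realization of each.

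For the realization with Jordan form $\mathcal{J}_J$, I would apply \v{S}migoc's Lemma. The strategy is to decompose $\Lambda=\mathrm{spec}(A)\cup(\mathrm{spec}(B)\setminus\{-2\})$, where $A$ is a small matrix with $(n,n)$-entry equal to $-2$ and with JCF containing the block $J_2(-2)$, and where $B$ is a nonnegative matrix carrying $-2$ as a simple eigenvalue. The lemma then produces the $5\times 5$ glue $C=\left[\begin{smallmatrix} A_1 & \mathbf{a}\mathbf{u}^T \\ \mathbf{v}\mathbf{b}^T & B \end{smallmatrix}\right]$ with Jordan canonical form $J(A)\oplus I(B)=\mathcal{J}_J$. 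The Perron eigenvalue $\mu=3+t-t_0$ (in (i), resp.\ $\mu=3+t+t_0$ in (ii)) is the natural candidate for the extra simple eigenvalue of $A$, since under the hypotheses it is the only choice with $\mu\geq 2$, hence the only choice compatible with a nonnegative $A_1$; concretely one may take
\[
A_1=\begin{bmatrix} 0 & \sqrt{2\mu} \\ \sqrt{2\mu} & \mu-2 \end{bmatrix},\qquad \mathrm{spec}(A_1)=\{\mu,-2\},
\]
and extend to a $3\times 3$ matrix $A$ by adjoining a third row/column with a suitable positive off-diagonal entry to force $J_2(-2)$. The remaining piece $B$ must then realize nonnegatively the $3$-list formed by the two other simple eigenvalues together with $-2$; when this sublist fails the trace condition for nonnegative realizability, the decomposition must be refined, for instance by enlarging $A$ to size $4$ and reducing $B$ to a $2\times 2$ realization of $\{\mu,-2\}$, or by chaining two \v{S}migoc glues, so as to keep all pieces nonnegatively realizable throughout the stated parameter range.

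For the realization with Jordan form $\mathcal{J}_d$, if the realization furnished by the hypothesis is diagonalizable, we are done; otherwise I would repeat the \v{S}migoc gluing above but with $A$ chosen diagonalizable (JCF $J_1(\mu)\oplus J_1(-2)\oplus J_1(-2)$ instead of $J_1(\mu)\oplus J_2(-2)$), producing a diagonalizable $C$ with the required spectrum. Part (ii) is handled by the same scheme, swapping the role of $-2+t_0$ in (i) for $-2-t_0$ in (ii) and the Perron eigenvalue accordingly.

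The main obstacle is securing the entrywise nonnegativity of $C$ uniformly over the parameter range. The block $A_1$ must be nonnegative despite $A$ carrying $-2$ with multiplicity two (which forces $\mu\geq 2$), the block $B$ must itself admit a nonnegative realization (a trace condition which is not automatic and which dictates the choice of splitting), and the cross-blocks $\mathbf{a}\mathbf{u}^T$ and $\mathbf{v}\mathbf{b}^T$ must have compatible signs (arranged via the normalisation $\mathbf{u}^T\mathbf{v}=1$ and the freedom to rescale $\mathbf{u},\mathbf{v}$). The hypotheses $0<t_0<2$ and $t_0/2<t\leq 3$ in (i), resp.\ $t_0>\max\{0,-2t\}$ and $t\leq 3$ in (ii), are calibrated precisely so that at least one admissible decomposition exists across the entire stated parameter range.
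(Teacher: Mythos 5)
Your overall strategy — exactly two admissible JCFs, realize both — is the right framing, and you correctly identify \v{S}migoc's gluing lemma as the main tool for the diagonalizable realization. However, your proposal diverges from the paper's proof in ways that create real gaps.

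\textbf{The nonderogatory JCF needs no gluing.} The paper notes that the realizability characterizations of Laffey--Meehan and Torre-Mayo et al.\ produce a realizing matrix in Hessenberg (companion-like) form with ones on the superdiagonal and zero diagonal; such a matrix is automatically nonderogatory, so $\operatorname{rank}(A+2I)=4$ and the Jordan block $J_2(-2)$ appears for free. Your plan instead builds a $3\times 3$ block $A$ with JCF $J_1(\mu)\oplus J_2(-2)$ and then glues it to a $3\times 3$ block $B$ with spectrum $\{3-t,\,-2+t_0,\,-2\}$. That $B$ has trace $t_0-t-1$, which is negative throughout most of the admissible parameter region (e.g.\ $t_0=1$, $t=0.8$), so $B$ admits no nonnegative realization. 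You notice the trace problem and propose refinements, but neither fallback you name is worked out, and the simplest one (``$B$ a $2\times 2$ realization of $\{\mu,-2\}$ with $A$ enlarged to $4\times 4$'') forces the $(4,4)$-entry of $A$ to equal $\mu=3+t-t_0$ while $\operatorname{tr}(A)=2$, which is incompatible with nonnegativity of the rest of the diagonal whenever $\mu>2$ — i.e., always under your hypotheses.

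\textbf{The key decomposition is the one you did not write down.} For the diagonal JCF, the paper first observes that if $\Lambda$ is symmetrically realizable (which, by Spector's characterization, happens exactly when $t\geq 1$), a diagonalizable realization is immediate. For $t<1$, the paper splits off a \emph{single} copy of $-2$ and glues along a \emph{fresh} eigenvalue not belonging to $\Lambda$: namely $\Gamma_1=\{3+t-t_0,\,3-t,\,-2+t_0,\,-2\}$ with $\operatorname{tr}(\Gamma_1)=2$, and $\Gamma_2=\{2,-2\}$. Taking $c=2$ makes $\Gamma_2$ trivially realizable by $\left[\begin{smallmatrix}0&2\\2&0\end{smallmatrix}\right]$, and forces $A$ to realize $\Gamma_1$ with diagonal $(0,0,0,2)$ — which can be achieved by an explicit Hessenberg-with-corner construction whose free parameter $d_1$ is chosen to keep all entries nonnegative. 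Your proposal never introduces the auxiliary eigenvalue $2=\operatorname{tr}(\Gamma_1)$, and without it the natural sublists on the $B$-side repeatedly fail the trace condition. This is the missing idea; the rest of your sketch cannot be completed without it (or some equivalent device).
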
 

\begin{proof}
{\it i)} Observe that the list $\Lambda  _t^{t_0}$ has two possible JCF, since the only repeated eigenvalue is $-2$ with double multiplicity.

Under the realizability conditions in \cite{Laffey, Torre}, the realizing matrices
for lists $\Lambda _{t}^{t_{0}}$ have the form 
$$
A=\left[ \begin{array} {ccccc}
0&1&0&0&0 \\
*&0&1&0&0 \\
*&*&0&1&0 \\
*&*&*&0&1 \\
*&*&*&*&0 
\end{array}
\right],
$$
then $\mbox{rank}(A+2I)=4$ and $A$ has a $JCF$ with a Jordan block of size two $J_{2}(-2)$. 

If $\Lambda  _t^{t_0}$ is symmetrically realizable (see Spector conditions in \cite[Theorem $3$]{Spector}), then $\Lambda  _t^{t_0}$ is $\mathcal{DR}$.

If $\Lambda  _t^{t_0}$ is realizable but not symmetrically realizable, which means that $t<1$ (see next section), we show that $\Lambda  _t^{t_0}$ is $\mathcal{DR}$
via the \v{S}migoc method given in Lemma \ref{Smigoc}.
Let 
\begin{equation*}
\Gamma   _{1}=\{3+t-t_{0},3-t,-2+t_{0},-2\} \; \text{and} \; \Gamma 
_{2}=\{\mbox{tr}(\Gamma  _{1}),-2\}=\{2,-2\}.
\end{equation*}
Note that these spectra are realizable because they satisfy the Perron and trace conditions. 
The matrix 
$$B=\left[
\begin{array}{cc}
  0 & 2    \\
  2 & 0   \\
\end{array}
\right]\approx J(B)=\left[
\begin{array}{cc}
  c=2 & 0    \\
  0 & -2   \\
\end{array}
\right]
$$ 
realizes  $\Gamma_2$. Let ${\bf u}^{T}=\left[ 1/2,1/2\right] $ and ${\bf v}^{T}=\left[ 1,1\right]$ be, respectively, left and right normalized eigenvectors  of $B$.

We need to find a realization of $\Gamma  _{1}$ with diagonal $(0,0,0,c=\mbox{tr}(\Gamma  _{1})=2)$ and the only realization that we know with this diagonal
is the one given in [16, Theorem 14] which is of the form
\begin{equation*}
A=\left[ 
\begin{array}{ccc|c}
0 & 1 & 0 & 0 \\ 
d_{1} & 0 & 1 & 0 \\ 
b & 0 & 0 & 1 \\ 
\hline
a & 0 & d_{3} & 2%
\end{array}%
\right]. 
\end{equation*}%
The characteristic polynomial of $A$ is 
$$\begin{array}{lll}
P_A(x)   & = & x^4 -2x^3-(d_1+d_3)x^2+(2d_1-b)x+2b+d_1d_3-a  \\
         & = & (x-(3+t-t_0))(x-(3-t))(x-(-2+t_0))(x+2) \\
         & = & x^4+k_1x^3+k_2x^2+k_3x+k_4 
\end{array}
$$
with  
\begin{equation*} 
\begin{array}{lll} 
k_2  & = & -(t^2 - t_0t + t_0^2 - 5t_0 + 11), \\
k_3  & = & (t_0 - 4)t^2 + t_0(4 - t_0)t + t_0^2 - 5t_0 + 12, \\ 
k_4  & = & 2(t_0 - 2)(t - t_0 + 3)(t - 3).
\end{array}
\end{equation*}
Identifying coefficients we have the system:
\begin{equation} \label{sistema}
d_1+d_3=-k_2, \quad 2d_1-b=k_3, \quad 2b+d_1d_3-a=k_4 
\end{equation}
which allows us to obtain realizations of $\Gamma _1$, in function of  $d_1$, of the form
$$A(d_1)=\left[
\begin{array}{ccc|c}
   0 & 1 & 0   &  0    \\
  d_1 & 0 & 1   &  0   \\
 2d_1-k_3  & 0 & 0   &  1    \\
 \hline 
 -d_1^2+(4-k_2)d_1-2k_3-k_4  & 0 & -k_2-d_1 &  2  
\end{array}
\right]
$$
that has JCF
$$
J(A(d_1))=\left[
\begin{array}{cccc}
   3+t-t_0 & 0 & 0   &  0    \\
   0 & 3-t   &  0  & 0 \\
 0  & 0 & -2+t_0   &  0    \\
 0  & 0 & 0 &  -2  
\end{array}
\right].
$$ 
Now, by Lemma \ref{Smigoc}, the bonding of matrices $A(d_{1})$ and $B$ leads to the matrix
\begin{equation*} 
C(d_1)=\left[
\begin{array}{ccc|cc}
  0   &  1  &  0   &  0  &  0  \\
 d_1   &  0  &  1   &  0  &  0  \\
2d_1-k_3   &  0  &  0   & 1/2 & 1/2    \\
\hline
 -d_1^2+(4-k_2)d_1-2k_3-k_4  &  0  & -k_2-d_1  &  0  &  2    \\
 -d_1^2+(4-k_2)d_1-2k_3-k_4  &  0  & -k_2-d_1  &  2  &  0   
\end{array}
\right]
\end{equation*} 
which realizes diagonally the list $\Lambda _{t}^{t_{0}}$.

Finally, $\Lambda _{t}^{t_{0}}$ is $\mathcal{UR}$.

\noindent {\it ii)} Analogously, under the realizability conditions in \cite{Laffey, Torre}, the realizing matrices
for lists $\Lambda _{t}^{'t_{0}}$ have a $JCF$ with a Jordan block of size two $J_{2}(-2)$. 

If $\Lambda  _t^{'t_0}$ is symmetrically realizable, then $\Lambda  _t^{'t_0}$ is $\mathcal{DR}$.

If $\Lambda  _t^{'t_0}$ is realizable but not symmetrically realizable (for $t<1$), we apply the \v{S}migoc method to the spectra
$$
\Gamma  _1^{'}=\{3+t+t_0, 3-t, -2,-2-t_0\} \; \mbox{and} \;   \Gamma  _2=\{2, -2\}
$$
and, in the same way, we obtain  the following $\mathcal{DR}$ realization of $\Lambda  _t^{'t_0}$
\begin{equation*} 
C(d_1)=\left[
\begin{array}{ccccc}
  0   &  1  &  0   &  0  &  0  \\
 d_1   &  0  &  1   &  0  &  0  \\
2d_1-k_3   &  0  &  0   & 1/2 & 1/2    \\
 -d_1^2+(4-k_2)d_1-2k_3-k_4  &  0  & -k_2-d_1  &  0  &  2    \\
 -d_1^2+(4-k_2)d_1-2k_3-k_4  &  0  & -k_2-d_1  &  2  &  0   
\end{array}
\right]
\end{equation*}
for the system (\ref{sistema}), with 
\begin{equation*} 
\begin{array}{lll} 
k_2  & = & -(t^2 - t_0t + t_0^2 + 5t_0 + 11), \\
k_3  & = & (t_0 + 4)t^2 + t_0(4 + t_0)t - t_0^2 - 5t_0 - 12, \\ 
k_4  & = & 2(t_0 + 2)(t + t_0 + 3)(3-t).
\end{array}
\end{equation*}

Hence, $\Lambda _{t}^{'t_{0}}$ is $\mathcal{UR}$.
\end{proof}

\begin{corollary}
Let $\Lambda =\{\lambda _{1},\lambda _{2},\ldots ,\lambda _{n}\}$ be a $\mathcal{UR}$ list with $\lambda _2$ real.
 The list $\{\lambda _{1}+\epsilon ,\lambda _{2}-\epsilon ,\lambda _{3},\ldots,\lambda _{n}\}$, for $\epsilon >0$, is not necessarily $\mathcal{UR}$.
\end{corollary}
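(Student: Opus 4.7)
My plan is to give an explicit counter-example by combining Theorem~\ref{CM}(i) with Proposition~\ref{Cronin}. Take
\[
\Lambda =\Lambda _{0.9}^{0.1}=\{3.8,\,2.1,\,-1.9,\,-2,\,-2\},
\]
the instance of the family $\Lambda _{t}^{t_{0}}=\{3+t-t_{0},\,3-t,\,-2+t_{0},\,-2,\,-2\}$ at $(t,t_{0})=(0.9,0.1)$. These parameters satisfy $0<t_{0}<\min\{1+t,2t\}$ and $t_{0}/2<t\leq 3$, so the setting of Theorem~\ref{CM}(i) applies. A nonnegative realization of $\Lambda$ is obtained directly from the construction in that proof: the $4\times 4$ matrix $A(d_{1})$ realizes $\Gamma _{1}=\{3.8,\,2.1,\,-1.9,\,-2\}$ nonnegatively in a nonempty $d_{1}$-interval (one checks that $d_{1}=5$ gives all entries nonnegative), and \v{S}migoc bonding via Lemma~\ref{Smigoc} with the $2\times 2$ matrix $B$ used in the proof yields a nonnegative realization of $\Lambda$. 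Hence by Theorem~\ref{CM}(i), $\Lambda$ is $\mathcal{UR}$.

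Now exploit the freedom to label the non-Perron entries of $\Lambda$. Fix $\lambda _{1}=3.8$ (the Perron) and take $\lambda _{2}=-1.9$ (real, as required by the corollary), $\lambda _{3}=2.1$, $\lambda _{4}=\lambda _{5}=-2$. Applying the Guo perturbation with $\epsilon=0.1$ produces
\[
\Lambda _{\epsilon}=\{\lambda _{1}+\epsilon,\,\lambda _{2}-\epsilon,\,\lambda _{3},\,\lambda _{4},\,\lambda _{5}\}=\{3.9,\,-2,\,2.1,\,-2,\,-2\},
\]
whose underlying multiset $\{3.9,\,2.1,\,-2,\,-2,\,-2\}$ is the Cronin--Laffey list $\{3+t^{\prime},\,3-t^{\prime},\,-2,\,-2,\,-2\}$ with $t^{\prime}=0.9<1$. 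By Proposition~\ref{Cronin}, $\Lambda _{\epsilon}$ is not diagonalizably realizable; in particular, the JCF that places three $1\times 1$ blocks at $-2$—an admissible JCF for the spectrum of $\Lambda _{\epsilon}$—has no nonnegative realization. Therefore $\Lambda _{\epsilon}$ is not $\mathcal{UR}$.

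The main insight guiding the construction, and the step most likely to mislead, is the choice to label as $\lambda _{2}$ the negative entry $-1.9=-2+t_{0}$ rather than the positive entry $2.1=3-t$; this ensures that the Guo shift $\lambda _{2}\mapsto \lambda _{2}-\epsilon$ lands exactly on the pre-existing eigenvalue $-2$, so that the perturbed multiset collapses onto the forbidden Cronin--Laffey region $t^{\prime}<1$. Once this choice is made, the verification splits into two routine parts: realizability (hence $\mathcal{UR}$) of $\Lambda$ via Theorem~\ref{CM}(i), and failure of $\mathcal{DR}$ (hence of $\mathcal{UR}$) for $\Lambda _{\epsilon}$ via Proposition~\ref{Cronin}.
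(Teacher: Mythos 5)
Your proof is correct and follows essentially the same route as the paper: take $\Lambda=\Lambda_t^{t_0}$ with $t<1$ in the realizable region (so $\mathcal{UR}$ by Theorem~\ref{CM}(i)), relabel so that $\lambda_2=-2+t_0$, apply Guo with $\epsilon=t_0$ to land on $\Lambda_{\pm t}=\{3+t,3-t,-2,-2,-2\}$ with $t<1$, and conclude via Proposition~\ref{Cronin} that this is not $\mathcal{DR}$ and hence not $\mathcal{UR}$. The only difference is that the paper argues at the level of the parameter family (citing Lemma~\ref{regions} for existence) while you fix the concrete instance $(t,t_0)=(0.9,0.1)$ and exhibit $d_1=5$; this makes the counter-example fully explicit but is the same idea.
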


\begin{proof}
Let 
\begin{equation*}
\Lambda =\Lambda   _{t}^{t_{0}} = \{3+t-t_0,3-t,-2+t_{0},-2,-2\}
\end{equation*}%
be a $\mathcal{UR}$ list as in Theorem \ref{CM} with $t<1$ (see Lemma \ref{regions}  for its existence).
Now, applying Wuwen perturbation with $\epsilon =t_0$, we obtain the list 
$$
\{3+t,3-t,-2,-2,-2\}
$$
which is not diagonalizably realizable by Proposition \ref{Cronin} and therefore it is not $\mathcal{UR}$.
\end{proof}\\

It is easy to see that if $\Lambda $ and $\Gamma $ are
lists of nonnegative real numbers, then $\Lambda \cup \Gamma $ is $%
\mathcal{UR}.$ Let%
\begin{equation*}
\Lambda =\{\lambda _{1},\lambda _{2},\ldots ,\lambda _{n}\} \; \text{ and } \; %
\Gamma =\{\mu _{1},\mu _{2},\ldots ,\mu _{m}\}
\end{equation*}%
be lists of real numbers of Sule\v{\i}manova type with trace zero and $\lambda _{1}>\mu _{1}$,  the Perron eigenvalues of $\Lambda $ and $\Gamma $ respectively. 
Then, from \cite{Collao}, $\Lambda \cup \Gamma $ is $\mathcal{UR}$. Now we show that this is not true for general lists.

\begin{lemma}
\label{lemma3} Let $\Lambda=\{\lambda_{1},\lambda_{1},\lambda_{2},%
\lambda_{2}\}$ be a list of real numbers with $\lambda_{1}>0>\lambda_{2}\geq -\lambda_{1}$ 
 and $\lambda_{1}+2\lambda_{2}<0$. Then $\Lambda $ has no nonnegative realization  with Jordan canonical form 
\begin{equation*}
J=%
\begin{bmatrix}
\lambda_{1} & 0 & 0 & 0 \\ 
0 & \lambda_{1} & 0 & 0 \\ 
0 & 0 & \lambda_{2} & 1 \\ 
0 & 0 & 0 & \lambda_{2}%
\end{bmatrix}%
.
\end{equation*}
\end{lemma}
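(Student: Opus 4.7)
The plan is a proof by contradiction. Suppose $A \geq 0$ realizes $\Lambda$ with Jordan canonical form $J$. Since $\lambda_{1}$ has algebraic multiplicity $2$ in $J$, Perron--Frobenius (which would force a simple Perron root for an irreducible nonnegative matrix) implies $A$ must be reducible. Place $A$ in its Frobenius normal form, a block lower-triangular matrix whose diagonal blocks $A_{11}, \ldots, A_{kk}$ are irreducible (allowing a trivial $1\times 1$ zero block). The Perron root of each $A_{ii}$ is a nonnegative real eigenvalue of $A$; since $\lambda_{2} < 0$ and $0 \notin \Lambda$, every such Perron root must equal $\lambda_{1}$. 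Since $A$ is block triangular, its characteristic polynomial factors as the product of those of the $A_{ii}$, so the algebraic multiplicity of $\lambda_{1}$ in $A$ equals the number of diagonal blocks with Perron root $\lambda_{1}$, namely $k$. Thus $k=2$, and the block sizes partition $4$ as $(3,1)$, $(1,3)$, or $(2,2)$.

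Next I rule out each partition. In the cases $(3,1)$ and $(1,3)$ the $3\times 3$ irreducible diagonal block has spectrum $\{\lambda_{1}, \lambda_{2}, \lambda_{2}\}$ and therefore trace $\lambda_{1} + 2\lambda_{2} < 0$, contradicting nonnegativity of its diagonal. In the case $(2,2)$, each $2\times 2$ irreducible block has spectrum $\{\lambda_{1},\lambda_{2}\}$ and, having distinct eigenvalues, is diagonalizable. Write the Frobenius form of $A$ with off-diagonal block $A_{21}$. If $A_{21} = 0$, then $A$ is block diagonal and diagonalizable, so $\lambda_{2}$ is semisimple in $A$, contradicting the prescribed block $J_{2}(\lambda_{2})$. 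If $A_{21} \neq 0$, I compute $\dim \ker(A - \lambda_{1}I)$ by imposing $(A_{11} - \lambda_{1}I)\mathbf{x} = 0$ and $(A_{22} - \lambda_{1}I)\mathbf{y} = -A_{21}\mathbf{x}$. The Fredholm solvability condition for the second equation is $\mathbf{w}_{2}^{T} A_{21} \mathbf{x} = 0$, where $\mathbf{w}_{2} > 0$ is the left Perron eigenvector of $A_{22}$. For nonzero $\mathbf{x} \in \ker(A_{11} - \lambda_{1}I)$ one has $\mathbf{x} = t\mathbf{u}_{1}$, with $\mathbf{u}_{1} > 0$ the right Perron eigenvector of $A_{11}$; since $A_{21} \geq 0$ is nonzero while $\mathbf{u}_{1}, \mathbf{w}_{2} > 0$, the number $\mathbf{w}_{2}^{T} A_{21} \mathbf{u}_{1}$ is strictly positive. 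Hence $t=0$, so $\dim \ker(A - \lambda_{1}I) = 1$ and $\lambda_{1}$ must carry a Jordan block of size $2$ in $A$, contradicting its prescribed semisimplicity in $J$.

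The most delicate step is the kernel-dimension computation in the $(2,2)$ subcase with $A_{21} \neq 0$; the rest reduces to standard Frobenius normal form bookkeeping together with the trace obstruction that rules out the partitions $(3,1)$ and $(1,3)$.
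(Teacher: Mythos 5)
Your proof is correct, and its core argument is genuinely different from the paper's. Both proofs begin by forcing $A$ into block lower-triangular form with two irreducible $2\times 2$ diagonal blocks, each with spectrum $\{\lambda_1,\lambda_2\}$ (you do this via the Frobenius normal form and explicitly rule out the partitions $(3,1)$ and $(1,3)$ using the trace obstruction $\lambda_1+2\lambda_2<0$; the paper asserts the same $(2,2)$ partition somewhat more tersely, relying implicitly on the same trace argument). The divergence is in how the $(2,2)$ case is finished. The paper evaluates the minimal polynomial $p(x)=(x-\lambda_1)(x-\lambda_2)^2$ at $A=\begin{bmatrix}B&0\\C&D\end{bmatrix}$; the $(2,1)$ block of $p(A)=0$ expands as $-\lambda_2(CB+DC)+DCB+\lambda_2^2 C=0$, a sum of nonnegative terms since $\lambda_2<0$, forcing $C=0$ and hence $\dim\ker(A-\lambda_2 I)=2$, contradicting the geometric multiplicity $1$ dictated by $J_2(\lambda_2)$. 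You instead split on $A_{21}=0$ versus $A_{21}\neq 0$: when $A_{21}=0$ the matrix is block-diagonal and diagonalizable, contradicting $J_2(\lambda_2)$; when $A_{21}\neq 0$ you compute $\dim\ker(A-\lambda_1 I)=1$ via the Fredholm solvability condition $\mathbf{w}_2^T A_{21}\mathbf{u}_1=0$, which is impossible because the left and right Perron vectors are strictly positive and $A_{21}$ is a nonzero nonnegative matrix — so $\lambda_1$ cannot be semisimple, contradicting $J$. Your route avoids expanding the minimal polynomial and leans only on Perron--Frobenius positivity and a kernel-dimension count on the Perron eigenvalue side rather than the $\lambda_2$ side; the paper's route is shorter once the minimal polynomial identity is set up but requires the careful block bookkeeping of $A^2$ and $A^3$. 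Both are clean, and yours has the minor virtue of being self-contained about why the partition must be $(2,2)$.
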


\begin{proof}
Suppose there exists a nonnegative realization $A$ of $\Lambda $ with Jordan canonical form $J(A)=J$. As $\lambda_1+2\lambda_2<0$, then $\Lambda$ only admits reducible realizations and must be partitioned as $\{\lambda_1,\lambda_2\}\cup\{\lambda_1,\lambda_2\}$. So we assume, without loss of
generality, that $A$ is of the form 
\begin{equation*}
A=%
\begin{bmatrix}
B & 0 \\ 
C & D%
\end{bmatrix}%
,
\end{equation*}%
where $B$ and $D$ are irreducible matrices with spectrum $\{\lambda
_{1},\lambda _{2}\}$. Therefore, from the minimal polynomial of $B$ and $D,$
we have%
\begin{equation*}
B^{2}=(\lambda _{1}+\lambda _{2})B-\lambda _{1}\lambda _{2}I\ \ \text{and}\
\ D^{2}=(\lambda _{1}+\lambda _{2})D-\lambda _{1}\lambda _{2}I.
\end{equation*}%
Since the minimal polynomial of $A$ is 
\begin{equation*}
x^{3}+(-\lambda _{1}-2\lambda _{2})x^{2}+(2\lambda _{1}\lambda _{2}+\lambda
_{2}^{2})x-\lambda _{1}\lambda _{2}^{2},
\end{equation*}%
then 
\begin{equation*}
A^{3}+(-\lambda _{1}-2\lambda _{2})A^{2}+(2\lambda _{1}\lambda _{2}+\lambda
_{2}^{2})A-\lambda _{1}\lambda _{2}^{2}I=0,
\end{equation*}%
with%
\begin{equation*}
A^{2}=%
\begin{bmatrix}
B^{2} & 0 \\ 
CB+DC & D^{2}%
\end{bmatrix}%
=%
\begin{bmatrix}
(\lambda _{1}+\lambda _{2})B-\lambda _{1}\lambda _{2}I & 0 \\ 
CB+DC & (\lambda _{1}+\lambda _{2})D-\lambda _{1}\lambda _{2}I%
\end{bmatrix}%
,
\end{equation*}%
and 
\begin{align*}
A^{3}& =AA^{2}=%
\begin{bmatrix}
(\lambda _{1}+\lambda _{2})B^{2}-\lambda _{1}\lambda _{2}B & 0 \\ 
(\lambda _{1}+\lambda _{2})CB-\lambda _{1}\lambda _{2}C+DCB+D^{2}C & 
(\lambda _{1}+\lambda _{2})D^{2}-\lambda _{1}\lambda _{2}D%
\end{bmatrix}
\\
& \\
& =%
\begin{bmatrix}
(\lambda _{1}^{2}+\lambda _{1}\lambda _{2}+\lambda _{2}^{2})B-(\lambda
_{1}^{2}\lambda _{2}+\lambda _{1}\lambda _{2}^{2})I & 0 \\ 
(\lambda _{1}+\lambda _{2})(CB+DC)+DCB-2\lambda _{1}\lambda _{2}C & (\lambda
_{1}^{2}+\lambda _{1}\lambda _{2}+\lambda _{2}^{2})D-(\lambda
_{1}^{2}\lambda _{2}+\lambda _{1}\lambda _{2}^{2})I%
\end{bmatrix}%
.
\end{align*}%
Therefore, \newline
$A^{3}-(\lambda _{1}+2\lambda _{2})A^{2}+(2\lambda _{1}\lambda _{2}+\lambda
_{2}^{2})A-\lambda _{1}\lambda _{2}^{2}I$ 
\begin{equation*}
=%
\begin{bmatrix}
(\lambda _{1}^{2}+\lambda _{1}\lambda _{2}+\lambda _{2}^{2})B-(\lambda
_{1}^{2}\lambda _{2}+\lambda _{1}\lambda _{2}^{2})I & 0 \\ 
(\lambda _{1}+\lambda _{2})(CB+DC)+DCB-2\lambda _{1}\lambda _{2}C & (\lambda
_{1}^{2}+\lambda _{1}\lambda _{2}+\lambda _{2}^{2})D-(\lambda
_{1}^{2}\lambda _{2}+\lambda _{1}\lambda _{2}^{2})I%
\end{bmatrix}%
\end{equation*}%
\begin{equation*}
-(\lambda _{1}+2\lambda _{2})%
\begin{bmatrix}
(\lambda _{1}+\lambda _{2})B-\lambda _{1}\lambda _{2}I & 0 \\ 
CB+DC & (\lambda _{1}+\lambda _{2})D-\lambda _{1}\lambda _{2}I%
\end{bmatrix}%
+(2\lambda _{1}\lambda _{2}+\lambda _{2}^{2})%
\begin{bmatrix}
B & 0 \\ 
C & D%
\end{bmatrix}%
\end{equation*}%
$-\lambda _{1}\lambda _{2}^{2}I=0$.\newline
Now, by equalizing the block in position ($2,1$) to zero, we have:%
\begin{eqnarray*}
(\lambda _{1}+\lambda _{2})(CB+DC)+DCB-2\lambda _{1}\lambda _{2}C -(\lambda _{1}+2\lambda _{2})(CB+DC)+(2\lambda _{1}\lambda _{2}+\lambda
_{2}^{2})C
\end{eqnarray*}
\begin{eqnarray*}
=-\lambda _{2}(CB+DC)+DCB+\lambda _{2}^{2}C=0.
\end{eqnarray*}%
Since the matrices involved in the last equality are nonnegative and $\lambda _{2}<0$, 
this is only possible if each addend is zero. In particular, $C=0$. Then 
\begin{align*}
\dim(\ker(A-\lambda _{2}I))& =4-\mbox{rank}(A-\lambda _{2}I) \\
& =4-\mbox{rank}%
\begin{bmatrix}
B-\lambda _{2}I & 0 \\ 
0 & D-\lambda _{2}I%
\end{bmatrix}
\\
& =4-(\mbox{rank}(B-\lambda _{2}I)+\mbox{rank}(D-\lambda _{2}I)) \\
& =4-(1+1)=2.
\end{align*}
However, from  $J(A)=J$ we have 
\begin{equation*}
\mbox{dim}(\mbox{ker}(A-\lambda _{2}I))=4-\mbox{rank}%
\begin{bmatrix}
\lambda _{1}-\lambda _{2} & 0 & 0 & 0 \\ 
0 & \lambda _{1}-\lambda _{2} & 0 & 0 \\ 
0 & 0 & 0 & 1 \\ 
0 & 0 & 0 & 0%
\end{bmatrix}%
=1,
\end{equation*}%
which contradicts the existence of a nonnegative realization $A$ with Jordan canonical form $J$.
\end{proof}\\

As an example, consider $\Lambda =\{1,-1\}.$ It
is clear that $\Lambda $ is $\mathcal{UR}$. However, from
Lemma \ref{lemma3}, the list $\Lambda \cup \Lambda = \{1,1,-1,-1\}$ has no
nonnegative realization with $JCF$ 
\begin{equation*}
J=%
\begin{bmatrix}
1 & 0 & 0 & 0 \\ 
0 & 1 & 0 & 0 \\ 
0 & 0 & -1 & 1 \\ 
0 & 0 & 0 & -1%
\end{bmatrix}%
.
\end{equation*}%
Therefore, $\Lambda \cup \Lambda $ is not $\mathcal{UR}$.

\section{Lists of size 5 with trace zero and three negative elements}

We are interested in the realizability of the lists with size 5 and trace zero 
\begin{equation*}
\Lambda _{\pm t}=\{3+t,3-t,-2,-2,-2\},
\end{equation*}
\begin{equation*}
\Lambda  _t^{t_0}=\{3+t-t_0, 3-t, -2+t_0,-2,-2\},
\end{equation*}
\begin{equation*}
\Lambda  _t^{'t_0}=\{3+t+t_0, 3-t, -2,-2,-2-t_0\}
\end{equation*}
introduced in Section 3.
It is well known that the list $\Lambda _{\pm t}$ is realizable if and only if $t\geq \sqrt{16\sqrt{6}-39} = 0.43799\cdots $ 
(see \cite{LaMe}), and symmetrically realizable if and only if $t\geq 1$ (see \cite{Spector}). Now, we study when the lists $\Lambda  _t^{t_0}$ 
and $\Lambda  _t^{'t_0}$  are realizable but not symmetrically realizable.
We need  the following result:

\begin{theorem} {\rm \cite[Theorem 39 for $n=5$ and $p=2$] {Torre}} \label{Torre}
 Let $P(x)=x^5+k_2x^3+k_3x^2+k_4x+k_5$. Then the following statements are equivalent: 
\vspace*{-.2cm}
\begin{itemize}
\item [{\it i)}] $P(x)$ is the characteristic polynomial of a nonnegative matrix; 
\vspace*{-.2cm}
\item [{\it ii)}] the coefficients of $P(x)$ satisfy:
\vspace*{-.3cm}
\begin{itemize}
\item [{\it a)}] $k_2, k_3\leq 0$;
\item [{\it b)}] $k_4\leq\frac{k_2^2}{4}$;
\item [{\it c)}] $k_5\leq\left\{
\begin{array}{ll}
	k_2k_3&\quad \mbox{if }\;\;k_4\leq 0,\\
	\vspace*{-.3cm} & \\
	k_3\Big(\frac{k_2}{2}-\sqrt{\frac{k_2^2}{4}-k_4}\Big)&\quad \mbox{if }\;\; k_4>0.\\
\end{array}\right.$
\end{itemize}
\end{itemize}
\end{theorem}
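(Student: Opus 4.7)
The plan is to prove the equivalence $(i)\Leftrightarrow(ii)$ by handling necessity and sufficiency separately. The vanishing of the $x^4$ coefficient forces any realizing nonnegative matrix $A$ to have zero trace, hence zero diagonal, which is the structural fact I would use throughout.

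For necessity $(i)\Rightarrow(ii)$, I would identify $k_j=(-1)^j e_j$, where $e_j$ is the $j$-th elementary symmetric polynomial of the eigenvalues, with $e_1=0$. The bounds on $k_2,k_3$ are immediate: expanding $e_2=-\sum_{i<j}a_{ij}a_{ji}\leq 0$ gives $k_2\leq 0$, and each principal $3\times 3$ minor of $A$ equals $a_{ij}a_{jk}a_{ki}+a_{ik}a_{kj}a_{ji}\geq 0$ (two directed $3$-cycle weights), so $e_3\geq 0$ and $k_3\leq 0$. For $k_4\leq k_2^2/4$, I would combine Newton's identities, which (using $p_1=e_1=0$) give $p_2=-2e_2$ and $p_4=2e_2^2-4e_4$, to reduce the inequality to $\operatorname{tr}(A^4)\geq (\operatorname{tr}(A^2))^2/4$, a walk-counting moment inequality which I would prove by pairing closed $4$-walks against $(\sum_{i<j}a_{ij}a_{ji})^2$ via Cauchy--Schwarz on the pairs $\{i,j\}$. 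The bound on $k_5$ would then follow by optimizing $e_5$ over admissible spectra with $e_2,e_3,e_4$ fixed; the case split reflects whether the quadratic $y^2-k_2y+k_4$ has real roots, with the threshold $k_2/2-\sqrt{k_2^2/4-k_4}$ (its smaller root) controlling $-e_5=k_5$ when $k_4>0$, and the cruder bound $k_5\leq k_2 k_3$ sufficing otherwise.

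For sufficiency $(ii)\Rightarrow(i)$, I would construct an explicit realizing matrix in the generalized-companion form with zero diagonal and unit superdiagonal,
\[
A = \begin{bmatrix}
0 & 1 & 0 & 0 & 0 \\
\ast & 0 & 1 & 0 & 0 \\
\ast & \ast & 0 & 1 & 0 \\
\ast & \ast & \ast & 0 & 1 \\
\ast & \ast & \ast & \ast & 0
\end{bmatrix},
\]
choosing the ten lower-triangular entries nonnegative so as to match $k_2,k_3,k_4,k_5$. Only four coefficient equations must be satisfied, leaving plenty of freedom to remain in the nonnegative region carved out by conditions (a)--(c); the existence of such a choice in each subregion would be established by an explicit parameter analysis. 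I expect the main obstacle to be the sharp moment inequality in the necessity step: the constant $4$ cannot be extracted from trace-zero alone, as the real trace-zero sequence $\{1,1,-\tfrac{2}{3},-\tfrac{2}{3},-\tfrac{2}{3}\}$ attains $(\sum\lambda_i^2)^2/\sum\lambda_i^4=30/7>4$ yet is not nonnegatively realizable. Hence the inequality genuinely uses $A\geq 0$, and the delicate point is to show that walk-counting on $\operatorname{tr}(A^4)$ produces enough additional cycle weight beyond the JLL bound $s_2^2\leq 5\,s_4$ to sharpen the constant to precisely $4$ for $n=5$ with $a_{ii}=0$.
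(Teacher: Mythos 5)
The paper does not contain a proof of this statement. Theorem~\ref{Torre} is quoted from reference \cite{Torre} (their Theorem~39 specialized to $n=5$, $p=2$), so there is no in-paper argument to compare your sketch against; I can only assess it on its own terms.

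Your sketch is sound through $k_2\le 0$ and $k_3\le 0$: the vanishing $x^4$-coefficient forces a zero diagonal, and the sign of the $2\times 2$ and $3\times 3$ principal minors then gives those two bounds, as you say. But the necessity of $k_4\le k_2^2/4$ is a genuine gap, precisely where you flag it. Newton's identities do reduce it to $(\operatorname{tr}A^2)^2\le 4\operatorname{tr}A^4$, but Cauchy--Schwarz indexed over vertices gives only $\bigl(\sum_j R_j\bigr)^2\le 5\sum_j R_j^2\le 5\operatorname{tr}A^4$ (the JLL bound, constant $5$), and indexing over the $10$ unordered pairs $\{i,j\}$ is weaker still; neither yields the constant $4$. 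The sharpening is real and is the actual content of condition~(b): for instance $\{2,2,-\tfrac{4}{3},-\tfrac{4}{3},-\tfrac{4}{3}\}$ has $k_2=-\tfrac{20}{3}$ and $k_4=\tfrac{320}{27}>\tfrac{300}{27}=k_2^2/4$, and indeed it is not realizable (it is $\tfrac{2}{3}\{3,3,-2,-2,-2\}$, i.e.\ $t=0$, below the Laffey--Meehan threshold $\sqrt{16\sqrt{6}-39}$ quoted in Section~4). You acknowledge you cannot bridge the gap from $5$ to $4$, and nothing in the outline does. The $k_5$ bound is likewise only asserted, and your reading of its case split is wrong: condition~(b) already guarantees $k_2^2-4k_4\ge 0$, so $y^2-k_2y+k_4$ always has real roots; the split $k_4\le 0$ versus $k_4>0$ reflects the sign of the smaller root and which expression is the tighter bound, not the reality of the roots. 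Finally, the sufficiency direction is only a plan: choosing ten free entries nonnegative so that four coefficient equations hold over the whole region (a)--(c) requires an explicit construction, which \cite{Torre} supplies via their EBL digraphs, close in spirit to your zero-diagonal, unit-superdiagonal template (and to the matrices used in the proof of Theorem~\ref{CM} in this paper), but a substantial combinatorial analysis that the sketch does not carry out.
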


\begin{lemma} \label{regions}
\begin{enumerate}
\item $\Lambda  _t^{t_0}=\{3+t-t_0, 3-t, -2+t_0,-2,-2\}$ with $0< t_0<2t<2$ is realizable, but not symmetrically realizable, in the region
\begin{equation} \label{regionT}
t\geq \frac{t_0+ \sqrt{16\sqrt{6-t_0}(4-t_0)-3t_0^2+52t_0-156}}{2}.
\end{equation}
\item $\Lambda  _t^{'t_0}=\{3+t+t_0, 3-t, -2,-2,-2-t_0\}$ with $0<t_0,t<1$ and $t+t_0<1$  is realizable, but not symmetrically realizable, in the region
\begin{equation} \label{regionR}
t\geq \frac{-t_0 + \sqrt{16\sqrt{6+t_0}(4+t_0)-3t_0^2-52t_0-156}}{2}.
\end{equation}
\end{enumerate}
\end{lemma}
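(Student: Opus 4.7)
The plan is to apply Theorem \ref{Torre} to both families and invoke Spector's criterion for the non-symmetric half. First, for each list I would expand the characteristic polynomial $P(x)=x^5+k_2x^3+k_3x^2+k_4x+k_5$ (the $x^4$-coefficient vanishes because the trace is $0$) and compute $k_2,k_3,k_4,k_5$ as explicit polynomials in $t,t_0$, starting from
\[
(x-(3+t-t_0))(x-(3-t))(x-(-2+t_0))(x+2)^2
\]
in part (1) and the analogous product in part (2). Conditions (a) of Theorem \ref{Torre} ($k_2\leq 0,\ k_3\leq 0$) are immediate sign checks on the stated parameter boxes.

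The realizability bound is extracted from condition (b), $k_4\leq k_2^2/4$. The key observation is that the natural substitution $u=2t-t_0$ in part (1) (respectively $u=2t+t_0$ in part (2)) makes $k_2^2-4k_4$ a quadratic in $U:=u^2$ with coefficients depending only on $t_0$, so one solves for $U$ by the quadratic formula. The inner radical $\sqrt{6-t_0}$ (respectively $\sqrt{6+t_0}$) arises from the discriminant of this quadratic, which factors cleanly as $256(t_0-4)^2(6-t_0)$ (respectively $256(t_0+4)^2(6+t_0)$) thanks to the doubled eigenvalue $-2$; the outer radical is the one produced by solving for $U$. Rearranging $U\geq U_+$ (the larger root) and using $u>0$ in the parameter region yields exactly \eqref{regionT} (respectively \eqref{regionR}). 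One must separately verify that condition (c) of Theorem \ref{Torre} holds automatically whenever (b) does in the stated box: on the (b)-boundary $\sqrt{k_2^2/4-k_4}=0$ reduces (c) to $k_5\leq k_2k_3/2$, a direct inequality that can be checked from the explicit formulas, and a monotonicity argument extends it into the interior.

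For the non-symmetric realizability half, the parameter constraints force $t<1$ in both families (from $2t<2$ in part (1), and from $t<1-t_0$ in part (2)). Spector's criterion {\cite[Theorem 3]{Spector}}, the same result the authors used to establish $t\geq 1$ as the symmetric threshold of $\Lambda_{\pm t}$, then rules out symmetric realizability throughout the stated regions for both families.

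The main obstacle will be the algebraic step above: recognizing the factorization $256(t_0-4)^2(6-t_0)$ (resp.\ $256(t_0+4)^2(6+t_0)$) of the discriminant of $k_2^2/4-k_4$ in the variable $U=u^2$, which is what produces exactly $(4-t_0)\sqrt{6-t_0}$ (resp.\ $(4+t_0)\sqrt{6+t_0}$) under the outer radical in \eqref{regionT} (resp.\ \eqref{regionR}). Without this collapse the bound would not reduce to the compact closed form of the lemma; with it, the remaining verifications are routine bookkeeping.
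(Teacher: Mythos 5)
Your proposal is correct and follows essentially the same route as the paper: expand the characteristic polynomial, apply Theorem~\ref{Torre} with the binding constraint coming from $k_4\le k_2^2/4$, where $k_2^2-4k_4 = V^2 - 16(4\mp t_0)^2(6\mp t_0)$ with $V=t^2\mp t_0 t + t_0^2 \mp 13 t_0 + 39$ yields the nested radical in \eqref{regionT}--\eqref{regionR}, and invoke Spector's theorem (via $t<1$) for the non-symmetric half. Your completing-the-square substitution $u=2t\mp t_0$ and quadratic formula in $U=u^2$ is merely a discovery route to the factorization the paper presents directly (the constant $256$ you quote depends on normalization and is not the one arising from $a=1/16$, $b=V$-linear, but the $(4\mp t_0)^2(6\mp t_0)$ structure is the point), and, like the paper, you dispatch condition~(c) of Theorem~\ref{Torre} with only a sketch rather than a full verification.
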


\begin{proof}
{\it 1.} Note that  $(t_0,t)$ varies in the interior of the triangle $T$ 
with vertices $(0,0), (0,1)$ and $(2,1)$.
The hypothesis $t<1$ guarantees that $\Lambda  _t^{t_0}$ is not symmetrically realizable (see \cite[Theorem $3$]{Spector}). 
 Let us see that $\Lambda  _t^{t_0}$ is realizable using Theorem \ref{Torre}.

The characteristic polynomial $x^5+k_2x^3+k_3x^2+k_4x+k_5$ of $\Lambda   _{t}^{t_{0}}$ is
$$
(x-(3+t-t_{0}))(x-(3-t))(x-(-2+t_{0}))(x+2)^2
$$ 
where 
$$
\begin{array}{l}
k_2=- t^2 +t_0t - t_0^2 + 5t_0 - 15  \\
k_3= -(6-t_0)t^2 + t_0(6 - t_0)t - t_0^2 + 5t_0 - 10\\
k_4 = 4((t_0 - 3)t^2 + t_0(3 - t_0)t + 2t_0^2 - 10t_0 + 15)\\
k_5 = 4(t - 3)(t - t_0 + 3)(t_0 - 2).
\end{array}
$$

Clearly $k_2$ is negative in the triangle $T$ because $k_2<t_0t + 5t_0 - 15 <-3$. 
The derivative of $k_3$ with respect to $t$ is $k_3'= -2(6-t_0)t + t_0(6 - t_0)$, which is $0$ in $t=t_0/2$ and then the maximum value of $k_3$ is 
$k_3(t_0/2)=(2-t_0)(t_0^2-20)/4$ which is negative for $0<t_0<2$ and so $k_3$ is also negative in $T$.

The inequality $k_4\leq \frac{k_2^2}{4}$ holds if and only if $k_2^2-4k_4$ is nonnegative. We have
\begin{equation*}
k_2^2-4k_4=(t^2 - t_0t  + 4(4 - t_0)\sqrt{6 - t_0} + t_0^2 - 13t_0 + 39)(t^2 - t_0t  - 4(4 - t_0)\sqrt{6 - t_0} + t_0^2 - 13t_0 + 39)
\end{equation*}
where the first factor is positive and the second is nonnegative in the triangle $T$ if 
\begin{equation*} 
t\geq \frac{t_0+ \sqrt{16\sqrt{6-t_0}(4-t_0)-3t_0^2+52t_0-156}}{2}.
\end{equation*}
The coefficient $k_4$ is positive in $T$ because 
\begin{equation*}
k_4>4(t_0^3/4 - 3 + (3 - t_0)t_0^2/2 + 2t_0^2 - 10t_0 + 15)=-t_0^3+14t_0^2-40t_0+48>0, 
\end{equation*}
and  $k_5\leq k_3\left(\frac{k_2}{2}-\sqrt{\frac{k_2^2}{4}-k_4}\right)$ in $T$ if the  inequality (\ref{regionT}) holds.

Therefore, by Theorem \ref{Torre}, we conclude that $\Lambda  _t^{t_0}$ is realizable in the region (\ref{regionT}).

\noindent {\it 2.}
Now $(t_0,t)$ varies in the interior of the triangle $R$ with vertices $(0,0), (0,1)$ and $(1,0)$.
Again,  the hypothesis $t<1$ implies no symmetric realization of $\Lambda  _t^{'t_0}$ (see \cite[Theorem $3$]{Spector}). 
\vskip0.3cm
The characteristic polynomial $x^5+k_1x^4+k_2x^3+k_3x^2+k_4x+k_5$ of $\Lambda  _t^{'t_0}$ is
$$
(x-(3+t+t_{0}))(x-(3-t))(x+2)^2(x-(-2-t_{0}))
$$ 
where 
$$
\begin{array}{l}
k_2=-(t^2 + t_0t + t_0^2 + 5t_0 + 15)\\
k_3= -((t_0 + 6)t^2 + t_0(t_0 + 6)t + t_0^2 + 5t_0 + 10)\\
k_4 = - 4((t_0 + 3)t^2 + t_0(t_0 + 3)t - 2t_0^2 - 10t_0 -15) \\
k_5 =  4(3 - t)(t + t_0 + 3)(t_0 + 2).
\end{array}
$$

Clearly $k_2$ and $k_3$ are negative in the triangle $R$. 
For $k_4\leq \frac{k_2^2}{4}$ we have
\begin{equation*}
k_2^2-4k_4=
(t^2 + t_0t  + 4(4 +t_0)\sqrt{6 + t_0} + t_0^2 + 13t_0 + 39)(t^2 + t_0t  - 4(4 + t_0)\sqrt{6 + t_0} +t_0^2 + 13t_0 + 39)
\end{equation*}
where the first factor is positive and the second is nonnegative in the triangle $R$ if 
\begin{equation*} 
t\geq \frac{-t_0 + \sqrt{16\sqrt{6+t_0}(4+t_0)-3t_0^2-52t_0-156}}{2}.
\end{equation*}

The coefficient $k_4$ is positive in $T$ because 
\begin{equation*}
k_4>-4((t_0 + 3)(1-t_0)^2 + t_0(t_0 + 3)(1-t_0) - 2t_0^2 - 10t_0 -15)=12(t_0^2+4t_0+4)>0,
\end{equation*}
and  $k_5\leq k_3\left(\frac{k_2}{2}-\sqrt{\frac{k_2^2}{4}-k_4}\right)$ in $R$ if the inequality (\ref{regionR}) holds.

Therefore, by Theorem \ref{Torre}, we conclude that $\Lambda  _t^{'t_0}$ is realizable in the region (\ref{regionR}).
\end{proof}\\

Figure \ref{Reg1} and Figure \ref{Reg2} show graphically the regions of realizability (the grey regions)
 of $\Lambda_t^{t_0}$ and $\Lambda_t^{'t_0}$ respectively, described in the previous lemma.

\begin{figure}[h]
\mbox{}\hfill
\begin{minipage}[h]{.6\hsize}
\setlength{\unitlength}{1cm}
 \begin{picture}(5,5)
\put(0,0){\epsfxsize=7.2cm \epsfbox{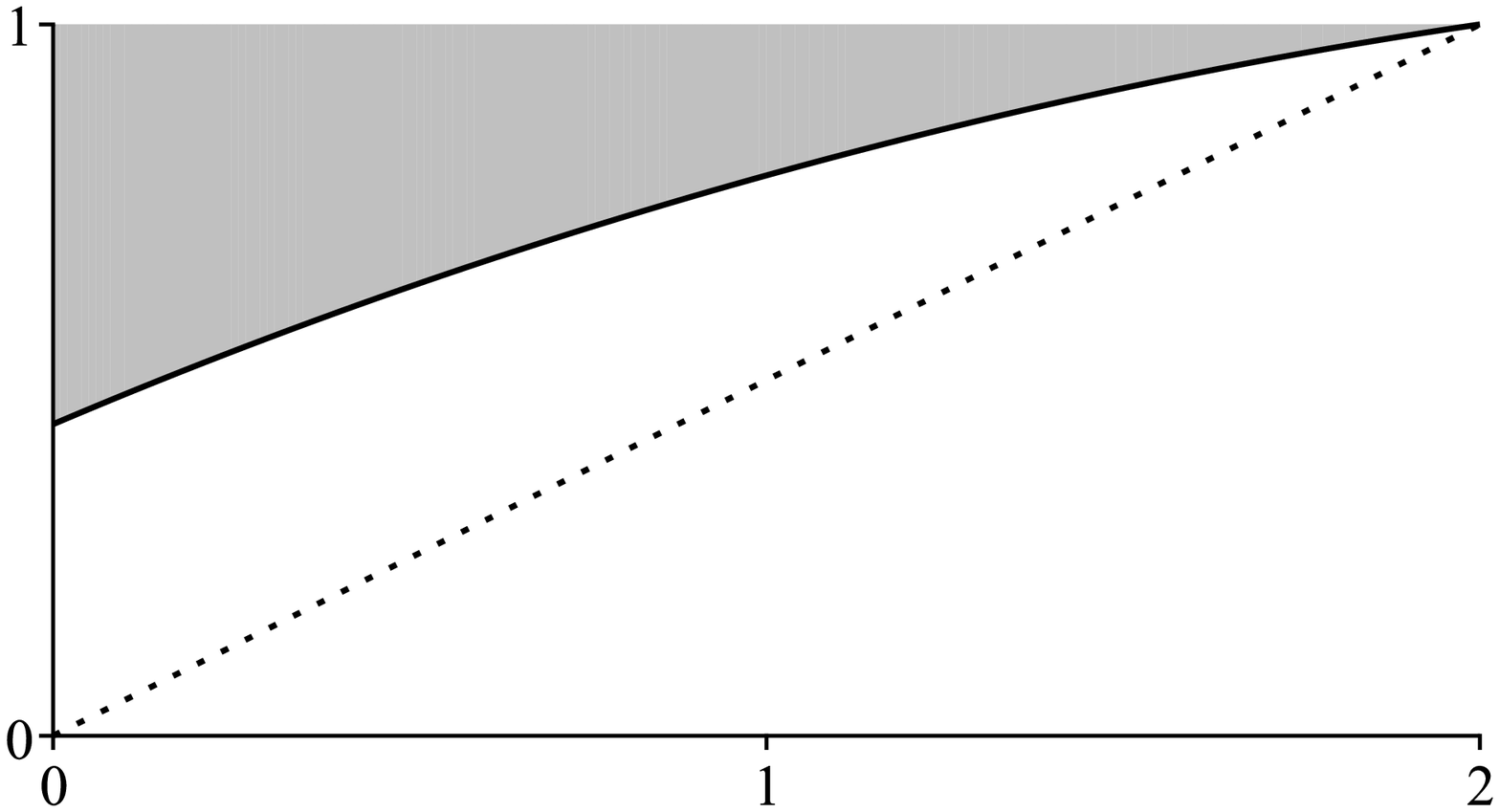}}
\put(6,0){$t_0$}
\put(4.2,2){$t=t_0/2$}
\put(0,2.85){$t$}
\end{picture}
\caption{List  $\Lambda_t^{t_0}.\hspace*{1cm}$}
\label{Reg1}
\end{minipage}
\begin{minipage}[h]{.3\hsize}
\setlength{\unitlength}{1cm}
\begin{picture}(5,5)
\put(0,0){\epsfxsize=4cm \epsfbox{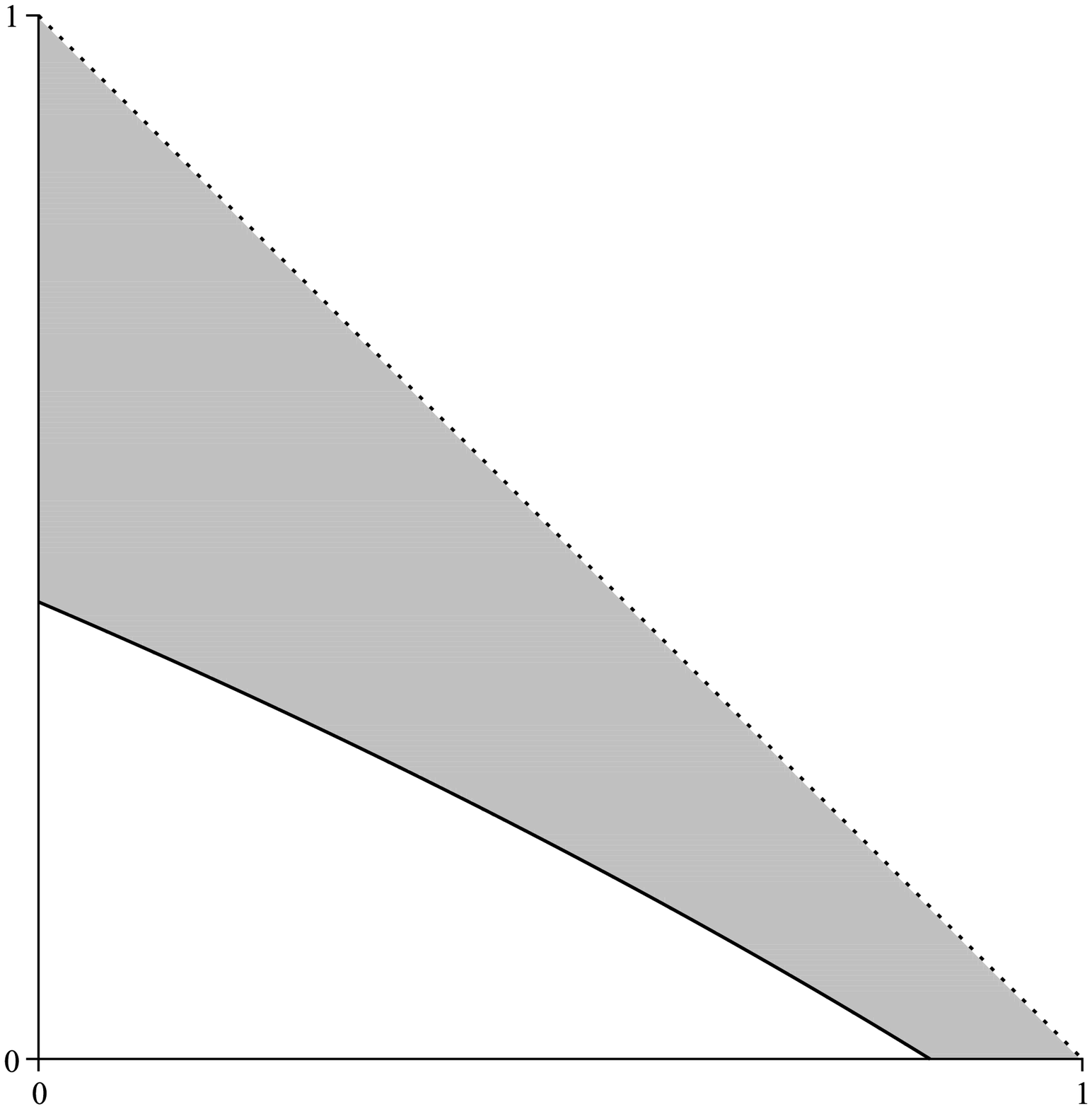}}
\put(3.3,-0.2){$t_0$}
\put(2.3,2){$t_0+t=1$}
\put(-0.1,2.85){$t$}
\end{picture}
\caption{ List $\Lambda_t^{'t_0}.$}
\label{Reg2}
\end{minipage}
\end{figure}

In the following example we give a diagonalizable nonsymmetric realization 
of the lists $\Lambda  _t^{t_0}$ and  $\Lambda  _t^{'t_0}$ for particular values of $(t_0,t)$ in the corresponding regions.

\begin{example} 
Let us consider the list  $\Lambda  _t^{t_0}$  for $t_0=1$.
By Lemma \ref{regions}, the list $\Lambda_t^{1}=\{2+t, 3-t, -1,-2,-2\}$ is realizable for 
$t\geq \frac{1}{2}(1+\sqrt{48\sqrt{5}-107})=   0.7877\cdots $. 
Let us consider $t=0.8$ and realize diagonalizably the list $\Lambda  _{0.8}^{1} =\{2.8, 2.2, -1,-2,-2\}$. 
The characteristic polynomial of the list $\Gamma  _1=\{2.8, 2.2,\break -1,-2\}$ is $x^4 - 2x^3 - \frac{171}{25}x^2 + \frac{212}{25}x + \frac{308}{25}$.
Following the proof of Theorem \ref{CM} we obtain
$$
d_3=\frac{171}{25}-d_1, \quad b=2d_1-\frac{212}{25}, \quad a=-d_1^2+\frac{271}{25}d_1-\frac{732}{25}.
$$
The entries $d_3$ and $b$ are nonnegative for $\frac{106}{25} \leq d_1 \leq \frac{171}{25}$. 
The entry $a$ is nonnegative for 
$d_1\in [\frac{271-\sqrt{241}}{50}, \frac{271+\sqrt{241}}{50}]=[5.10951\cdots , 5.73048\cdots ]$. 
Then the rank of $a$ is between $0$ and its maximum value attained in $d_1=\frac{271}{50}$, i.e., 
$a\in [0,0.094]$.
If we take $d_1=5.5$ we obtain the matrices 

\noindent $A(5.5)=\left[
\begin{array}{cccc}
  0 & 1 & 0   &  0    \\
  5.5 & 0 & 1   &  0   \\
 2.52  & 0 & 0   &  1    \\
 0.09  & 0 & 2.58 &  2  
\end{array}
\right]$ \; and \;
$C(5.5)=\left[
\begin{array}{ccccc}
  0 & 1 & 0   &  0  & 0  \\
  5.5 & 0 & 1   &  0 & 0  \\
 2.52  & 0 & 0   &  0.5 & 0.5   \\
 0.09  & 0 & 2.58 & 0 &  2  \\
 0.09 &  0  & 2.58 &  2  &  0 
\end{array}
\right]$ that realize $\Gamma_1$ and $\Lambda  _{0.8}^{1}$ respectively.

Finally, we consider the list $\Lambda  _t^{'0.5}=\{3.5+t, 3-t, -2,-2,-2.5\}$ that, by Lemma \ref{regions}, is realizable for 
$t\geq \frac{-1+\sqrt{144\sqrt{26}-731}}{4}=   0.2013\cdots $.     
Let us consider  $t=0.3$ and realize diagonalizably the list $\Lambda  _{0.3}^{'0.5}=\{3.8, 2.7, -2,-2,-2.5\}$.
The characteristic polynomial of the list $\Gamma  _1^{'}=\{3.8, 2.7, -2,-2.5\}$ is $x^4 -2x^3-\frac{1399}{100}x^2+\frac{1367}{100}x+\frac{513}{10}$.
From the proof of Theorem \ref{CM} we obtain
$$
d_3=\frac{1399}{100}-d_1, \quad b=2d_1-\frac{1367}{100}, \quad a=-d_1^2+\frac{1799}{100}d_1-\frac{1966}{25}.
$$
The entries $d_3$ and $b$ are nonnegative for  $\frac{1367}{200} \leq d_1 \leq \frac{1399}{100}$. 
The entry $a$ is nonnegative for 
$d_1\in [\frac{1799-9\sqrt{1121}}{200}, \frac{1799+9\sqrt{1121}}{200}]=[7.483\cdots , 10.501\cdots ]$. 
Then the rank of $a$ is between $0$ and its maximum value attained in  $d_1=\frac{1799}{200}$, i.e.,
$a\in [0,2.270025]$.
If we take $d_1=9$ we obtain the matrices 

 $A(9)=\left[
\begin{array}{cccc}
  0 & 1 & 0   &  0    \\
  9 & 0 & 1   &  0   \\
 4.33  & 0 & 0   &  1    \\
 2.27  & 0 & 4.99 &  2  
\end{array}
\right]$ \; and \;
$C(9)=\left[
\begin{array}{ccccc}
  0   &  1  &  0   &  0  &  0  \\
  9   &  0  &  1   &  0  &  0  \\
 4.33   &  0  &  0   & 0.5 & 0.5    \\
 2.27 &  0  & 4.99 &  0  &  2    \\
 2.27 &  0  & 4.99 &  2  &  0   
\end{array}
\right]$ 

\noindent that realize $\Gamma _1^{'}$ and $\Lambda  _{0.3}^{'0.5}$ respectively.

\end{example}


\begin{thebibliography}{99}
\bibitem{Berman} A. Berman, R. J. Plemmons, Nonnegative matrices in the
mathematical sciences, in: classics in applied mathematics 9, Society for
Industrial and Applied Mathematics (SIAM), Philadelphia, PA, 1994.

\bibitem{Borobia} A. Borobia, J. Moro, R. L. Soto, A unified view on
compensation criteria in the real nonnegative inverse eigenvalue problem, 
\textit{Linear Algebra Appl.} 428 (2008) 2574-2584.

\bibitem{Collao} M. Collao, C. R. Johnson, R. L. Soto, Universal
realizability of spectra with two positive eigenvalues, \textit{Linear
Algebra Appl.} 545 (2018) 226-239.

\bibitem{Cronin} A. G. Cronin, T. J. Laffey, The diagonalizable nonnegative
inverse eigenvalue problem, \textit{arXiv:1701.08651 (2017).}

\bibitem{Diaz Soto} R. C. D\'{\i}az, R. L. Soto, Nonnegative inverse
elementary divisors problem in the left half plane, \textit{Linear and
Multilinear Algebra} 64 (2016) 258-268.

\bibitem{Guo}  W. Guo, Eigenvalues of 
Nonnegative Matrices, \textit{Linear Algebra Appl.} 266 (1997)
261-270.

\bibitem{Johnson} C. R. Johnson, Row stochastic matrices similar to doubly
stochastic matrices. \textit{Linear and Multilinear Algebra 10 (1981)
113-130.}

\bibitem{LaMe} T. J. Laffey, E. Meehan, A refinement of an inequality of Johnson, Loewy and London on nonnegative matrices and some applications, \textit{Electron. J. Linear Algebra} 3
(1998) 119-128.

\bibitem{Laffey} T. J. Laffey, E. Meehan, A characterization of trace zero
nonnegative $5\times 5$ matrices, \textit{Linear Algebra Appl.} 302/303
(1999) 295-302.

\bibitem{Minc 1} H. Minc, Inverse elementary divisor problem for nonnegative
matrices, \textit{Proceedings of the American Mathematical Society} 83
(1981) 665-669.

\bibitem{Minc 2} H. Minc, Inverse elementary divisor problem for doubly
stochastic matrices. \textit{Linear Multilinear Algebra} 11 (1982) 121-131.

\bibitem{Smigoc} H. \v{S}migoc, The inverse eigenvalue problem for
nonnegative matrices, \textit{Linear Algebra Appl.} 393 (2004) 365-374.

\bibitem{Soto Ccapa} R. L. Soto, J. Ccapa, Nonnegative matrices with
prescribed elementary divisors, \textit{Electron. J. Linear Algebra} 17
(2008) 287-303.

\bibitem{Soto Diaz} R. L. Soto, R. C. D\'{\i}az, H. Nina, M. Salas,
Nonnegative matrices with prescribed spectrum and elementary divisors, 
\textit{Linear Algebra Appl.} 439 (2013) 3591-3604.

\bibitem{Soto Julio} R. L. Soto, A. I. Julio, M. Salas, Nonnegative
persymmetric matrices with prescribed elementary divisors, \textit{Linear
Algebra Appl.} 483 (2015) 130-157.

\bibitem{Soto4} R. L. Soto, R. C. D\'{\i}az, M. Salas, O. Rojo, M-matrices
with prescribed elementary divisors, Inverse Problems 33 (2017) 095009 (15
pp.)

\bibitem{Spector} O. Spector, A characterization of trace zero symmetric
nonnegative $5\times 5$ matrices, \textit{Linear Algebra Appl.} 434 (2011)
1000-1017.

\bibitem{Torre} J. Torre-Mayo, M. R. Abril-Raymundo, E. Alarcia-Est\'{e}vez,
C. Mariju\'{a}n, M. Pisonero, The nonnegative inverse eigenvalue problem
from the coefficients of the characteristic polynomial. EBL digraphs, 
\textit{Linear Algebra Appl.} 426 (2007) 729-773.


\end{thebibliography}
\end{document}